\newtheorem{theorem}{Theorem}[section]
\newtheorem{conjecture}[theorem]{Conjecture}
\newtheorem{proposition}[theorem]{Proposition}
\newtheorem{lemma}[theorem]{Lemma}
\newtheorem{example}[theorem]{Example}
\newtheorem{claim}[]{Claim}
\theoremstyle{definition}
\newtheorem{definition}[theorem]{Definition}
\numberwithin{equation}{section}
\newcommand{\mb}{\mathbb}
\newcommand{\mc}{\mathcal}
\newcommand{\mk}{\mathfrak}
\newcommand{\wti}{\widetilde}
\newcommand{\mr}{\mathrm}
\newcommand{\dist}{\operatorname{dist}}
\DeclareMathOperator{\Ric}{Ric}
\title{Uryson width of three dimensional mean convex domain with non-negative Ricci curvature}
\date{\today}
\author{Zhichao Wang}
\address{Department of Mathematics, The University of British Columbia, Vancouver, BC V6T 1Z2, Canada}
\email{zhichao@math.ubc.ca}
\author{Bo Zhu}
\address{School of Mathematics  University of Minnesota-Twin Cities, MN 55455}
\email{zhux0629@umn.edu}
\begin{document}
	
	\begin{abstract}
		We prove that for every three dimensional manifold with nonnegative Ricci curvature and strictly mean convex boundary, there exists a Morse function so that each connected component of its level sets has a uniform diameter bound, which depends only on the lower bound of mean curvature. This gives an upper bound of  Uryson 1-width for those three manifolds with boundary.
	\end{abstract}

	\maketitle
	
	\section{Introduction}
	In \cite{Gro19}, Gromov proposed the following conjecture:
	\begin{conjecture}[Gromov \cite{Gro19} ] \label{conjecture}
		Suppose that  $X \subset  \mathbb{R}^n$ is a smooth domain such that $H_{\partial X} \geq n-1$.
		Then there exists a continuous self-map $R: X \rightarrow  X$ such that
		\begin{itemize}
			\item  the image $R(X) \subset  X $ has topological dimension $n -2$;
			\item $\dist(x, R(x)) \leq  c_n $ for all $x \in  X$, with the best expected $c_n=1$.
		\end{itemize}
	\end{conjecture}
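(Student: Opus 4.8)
We sketch a proof of Conjecture~\ref{conjecture} in dimension three under the additional assumption $\Ric_X\ge 0$; the displacement constant produced depends only on a positive lower bound $h_0$ for $H_{\partial X}$ (and not on the conjectured optimal value $c_3=1$). It suffices to produce a Morse function $\phi\colon X\to\mathbb R$ each of whose level sets $\phi^{-1}(t)$ has all connected components of diameter at most $c(h_0)$. Indeed, collapsing every connected component of every level set of $\phi$ to a point yields the Reeb graph $\Gamma$, which is a finite $1$-complex because $\phi$ has finitely many critical points, and the quotient map $q\colon X\to\Gamma$ has all fibres of diameter at most $c(h_0)$. This is exactly an upper bound $UW_1(X)\le c(h_0)$ for the Uryson $1$-width, and composing $q$ with a continuous section $\Gamma\to X$ gives a self-map $R$ as in the conjecture, with $\dim R(X)\le 1$ and $\dist(x,R(x))\le c(h_0)$.

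First I would bound the inradius. If $\gamma$ is a unit-speed geodesic leaving $\partial X$ orthogonally and $H(t)$ denotes the mean curvature at $\gamma(t)$ of the parallel surface $\{\dist(\cdot,\partial X)=t\}$, the Riccati equation gives $H'(t)=|A(t)|^2+\Ric(\gamma'(t),\gamma'(t))\ge\tfrac12 H(t)^2$ with $H(0)\ge h_0$, so $H$ blows up before $t=2/h_0$ and hence $\dist(\cdot,\partial X)<2/h_0$ on $X$. This is the only use of the ambient geometry, and by itself it does not control the width: a long thin mean convex tube has level sets of $\dist(\cdot,\partial X)$ of large diameter, so $\phi$ must instead be swept out by surfaces that cut \emph{transversally} across $X$. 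Constructing such a sweepout is the heart of the matter.

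To produce such surfaces I would use free boundary $\mu$-bubbles. After splitting $X$ along finitely many embedded area-minimising surfaces one reduces to the case that $X$ is a band, $\partial X=\partial_-X\sqcup\partial_+X$ with each piece mean convex. Choose a smooth function $h$ on $\mathrm{int}\,X$, built from $\dist(\cdot,\partial_-X)$, that diverges to $+\infty$ towards $\partial_-X$ and to $-\infty$ towards $\partial_+X$ at a rate governed by a Riccati-type differential inequality; the inradius bound makes such an $h$ available at scale comparable to $1/h_0$. Minimising
\[
  \mathcal A(\Omega)=\mathcal H^2\big(\partial^*\Omega\cap\mathrm{int}\,X\big)-\int_\Omega h\,d\mathcal H^3
\]
over Caccioppoli sets $\Omega$ that contain a fixed collar of $\partial_-X$ and avoid a fixed collar of $\partial_+X$ yields a smooth stable surface $\Sigma=\partial^*\Omega$ with mean curvature $h$, meeting $\partial X$ orthogonally along $\partial\Sigma$. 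Sliding the collars gives a one-parameter family $\{\Sigma_s\}$ foliating $X$, and after a generic perturbation we take $\phi$ to be the parameter of this foliation (any residual bounded pieces receive an arbitrary Morse function, glued in).

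Finally I would bound each component $\Sigma'$ of each $\Sigma_s$. The second variation of $\mathcal A$ contains a boundary integral of the second fundamental form of $\partial X$ along $\partial\Sigma'$, so the stability inequality feels $H_{\partial X}\ge h_0$; combining it with the Gauss equation $2K_{\Sigma'}=\Scal_X-2\Ric(\nu,\nu)+h^2-|A_{\Sigma'}|^2$, with $\Ric(\nu,\nu)\ge0$ and $\Scal_X\ge0$, with the differential inequality imposed on $h$, and with the identity $\mathrm{II}_{\partial X}(\nu,\nu)=H_{\partial X}-k_g$ along $\partial\Sigma'$ together with Gauss--Bonnet, and then running the Schoen--Yau--Gromov rearrangement with a test function that depends on the intrinsic distance to $\partial\Sigma'$, one obtains a Bonnet--Myers type bound $\operatorname{diam}(\Sigma')\le c(h_0)$; the same inequality with $f\equiv1$ also bounds $\chi(\Sigma')$, hence the number of components of each $\Sigma_s$. \textbf{The main obstacle is precisely this last estimate.} Interior nonnegativity of the Ricci curvature is useless on its own --- a flat three-torus has $\Ric\equiv0$ and infinite Uryson $1$-width --- so the positivity $h_0$, which a priori lives only on $\partial X$, must be transported into the interior: one has to show, using the divergence of $h$ near $\partial X$, the boundary term in the stability inequality, and the inradius bound together, that no $\Sigma'$ can contain an intrinsic point far from $\partial\Sigma'$. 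Making the $\mu$-bubble minimisation well posed (no collapse, no escape to the boundary), assembling the leaves into a single globally defined Morse function across the singular parameters, and keeping track of disconnected level sets are the remaining, more routine, points.
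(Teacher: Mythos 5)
Your proposal, like the paper, in fact establishes only the Uryson $1$-width bound and not the retraction conjecture itself. The reduction you sketch --- collapse fibre components of a Morse function $\phi$ to obtain a quotient $q\colon X\to\Gamma$ onto the Reeb graph, then ``compose $q$ with a continuous section $\Gamma\to X$'' --- is unjustified: the quotient onto the Reeb graph does not in general admit a continuous section, and the paper is explicit that the implication runs the other way (the conjectured retraction implies the width bound, not conversely). If $X$ is non-compact the Reeb graph need not even be a finite complex. So what you can hope to prove is what the paper's Theorem~\ref{levelsetcontrol} proves.

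Granting that, your route to the width bound is genuinely different from the paper's and has two real gaps. First, the step ``after splitting $X$ along finitely many embedded area-minimising surfaces one reduces to the case that $X$ is a band'' compresses the hardest part of the paper into one sentence. The paper's decomposition into geometrically prime regions (Lemma~\ref{lem:cut}, Proposition~\ref{prop:cut two side to get prime}) must control one-sided stable free boundary minimal surfaces and their double covers, two-sided stable surfaces bounding product regions, a Frankel property, and --- when a piece is still not geometrically prime --- produce an index-one separating surface by running free-boundary min-max on compact exhaustions with perturbed metrics and passing to a limit. Cutting along area-minimisers alone does not get you to a clean two-ended band, and for non-compact $X$ the exhaustion issues are essential. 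Second, the leaf-diameter estimate for your free-boundary $\mu$-bubbles is asserted but not derived. You correctly observe that the boundary term $\mathrm{II}_{\partial X}(\nu,\nu)$ in the second variation, rewritten via $\mathrm{II}_{\partial X}(\nu,\nu)=H_{\partial X}-k_g$ and Gauss--Bonnet, is the only place $H_{\partial X}\geq h_0$ can enter, and that $\Ric\geq0$ alone is useless; but you do not exhibit a warping $h$ for which the Schoen--Yau rearrangement actually closes into a Bonnet--Myers bound once the $\partial_\nu h$, Euler characteristic, and geodesic-curvature terms are all present. This is precisely the role of the paper's Theorem~\ref{thm:compact boundary}, which bounds the distance from any interior point of a surface with $|H_\Sigma|\le1$ to its boundary by a quite different mechanism: it uses the hypothetical far-from-boundary point to minimise the prescribed-mean-curvature functional $\mathcal A^1$ with the given surface and the mean-convex $\partial M$ as barriers, produces a stable $H=1$ cmc surface, forces it to be closed via the Schoen--Yau radius estimate of Proposition~\ref{lem:bound radius of cmc}, and contradicts the nonexistence of closed stable cmc surfaces when $\Ric\ge0$. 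That radius bound, combined with the boundary-length estimates of Carlotto--Franz and Ambrozio--Buzano--Carlotto--Sharp (Lemma~\ref{lem:stable fbms}), is what replaces your missing $\mu$-bubble estimate; you would need a proved analogue of it before your sweepout idea can be run.
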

	
	Recall that the {\em Uryson $k$-width $\mathrm{width}_k(M)$} of a Riemannian manifold $M$ is the infimum of the real numbers $d\geq 0$, such that there exist a $k$-dimensional polyhedral space $P^k$ and a continuous map $f:M\to P^k$ with 
	\[ \mathrm{diam}_Mf^{-1}(p)\leq d, \ \ \ \text{ for all } \  p\in P^k,\]
	where $\mr{diam}(\cdot)$ denotes the diameter of the subset of $M$. Clearly, Conjecture \ref{conjecture} implies that the Uryson $(n-2)$-widths of mean convex domains in Euclidean spaces are bounded from above by a constant relying on their mean curvature lower bounds. In this paper, we we give a direct proof of such an upper bound. More generally, our result holds for all three-dimensional mean convex domains with non-negative Ricci curvature.
	\begin{theorem}\label{levelsetcontrol}
		Suppose that $(M,\partial M,g)$ is a complete (possibly non-compact) three dimensional Riemannian manifold with $\Ric(g)\geq 0$ and $H_{\partial M}\geq 1$. Then there exists a smooth Morse function $f: M\to \mb R$ such that for any $t$ and  $x,y$ in the same connected component of $f^{-1}(t)$,
		\[ \dist_M(x,y)< 117.\]
		In particular, if $M$ is a smooth domain in $\mb R^3$ with $H_{\partial M}\geq 1$, then the upper bound can be improved to be $49$.
	\end{theorem}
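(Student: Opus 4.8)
The plan is to sweep $M$ out by level sets of a Morse function and control the diameter of each connected level set using the mean curvature lower bound together with nonnegativity of the Ricci curvature. The natural candidate for $f$ is (a Morse perturbation of) the distance function to the boundary, $f(x) = \dist_M(x,\partial M)$, rescaled so that its level sets interpolate between $\partial M$ and the interior. The key geometric input is that, under $H_{\partial M}\ge 1$, the mean-convex boundary forces the distance function to $\partial M$ to be bounded: by the Heintze--Karcher inequality (or a direct Riccati comparison using $\Ric\ge 0$), every point of $M$ lies within distance at most $n-1 = 2$ (a bound of this type — I would expect something like $\le 2$ in dimension $3$) of $\partial M$. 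Thus the image of $f$ is a bounded interval, and the whole difficulty is transferred to bounding the intrinsic diameter of a single connected component of a level set $\Sigma_t = f^{-1}(t)$, rather than the extent of the sweepout.

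To bound $\mathrm{diam}(\Sigma_t)$, I would run a Gromov--Guth / Schoen--Yau style argument specialized to surfaces. First, the components $\Sigma_t$ are (for a.e.\ $t$, by Sard) smooth surfaces; by the first variation of the distance function and the mean-convexity of $\partial M$, together with $\Ric\ge 0$, one gets a lower bound on the mean curvature of $\Sigma_t$ relative to the ``outward'' normal direction, of the form $H_{\Sigma_t}\ge c(t) > 0$ on the portion of $M$ swept so far. Next, I would intersect $\Sigma_t$ with geodesic balls and estimate areas: a monotonicity/isoperimetric-type inequality for surfaces with a definite mean curvature lower bound inside a $3$-manifold with $\Ric\ge 0$ gives that a geodesic disk of radius $r$ in $\Sigma_t$ has area $\gtrsim r^2$ as long as $r$ stays below the diameter; combined with a Gauss--Bonnet bound $\int_{\Sigma_t} K \le 4\pi(1 - \genus)$ controlling the total intrinsic curvature (using Gauss equation, $\Ric\ge 0$, and the mean curvature lower bound to bound $K$ from below away from $-\infty$), one caps the area of each component. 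The two estimates together — area grows quadratically in radius but total area is bounded — force a universal diameter bound. Chasing the explicit constants ($n-1=2$ for the boundary distance, the $4\pi$ from Gauss--Bonnet, the mean curvature normalization $H\ge 1$) is what produces the numbers $117$ in general and $49$ in the Euclidean case (where one has the sharper monotonicity formula).

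The Morse function itself is obtained by a standard perturbation: replace $\dist(\cdot,\partial M)$ by a smooth Morse approximation $f$ whose level sets are $C^0$-close to the metric spheres about $\partial M$, so that the diameter bounds for $\Sigma_t$ pass to the components of $f^{-1}(t)$ with only an $\varepsilon$ loss, absorbed into the stated constant. One must also handle the topology: a priori $M$ may be noncompact or have complicated second homology, so I would localize — work component by component and use that $\Ric\ge 0$ plus $H_{\partial M}\ge 1$ already forces a lot of rigidity on the large scale (e.g.\ a Bonnet--Myers-type diameter bound for $M$ itself once the boundary distance is controlled), so that the function $f$ has bounded image and finitely many critical values on each end.

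The main obstacle I anticipate is the area/diameter estimate for a single level set: one needs a clean surface-theoretic inequality that converts ``$H_{\Sigma_t}\ge$ const'' plus an ambient $\Ric\ge 0$ hypothesis into a quantitative bound on $\mathrm{diam}(\Sigma_t)$, uniformly in $t$ and independent of the (possibly unbounded) ambient geometry. The delicate point is that $\Sigma_t$ is only \emph{mean}-convex in $M$, not totally geodesic or stable, so one cannot directly apply the Schoen--Yau stable minimal surface diameter bound; instead one must exploit that $\Sigma_t$ arises as a distance level set — hence is a smooth solution of a Riccati-type equation transverse to it — to get the needed second-order control, and then combine Gauss--Bonnet with a Bishop--Gromov-in-the-surface argument. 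Making this robust (and extracting honest constants) is where the real work lies; everything else is packaging.
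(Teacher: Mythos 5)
Your proposal has a fatal flaw at its very first step: the level sets of $f(x)=\dist_M(x,\partial M)$ do \emph{not} have bounded diameter under the stated hypotheses. Take $M\subset\mb R^3$ to be a solid cylinder of radius $1$ and length $L$ with hemispherical caps. Then $\Ric=0$ and $H_{\partial M}\geq 1$ (the cylindrical part has $H=1$, the caps $H=2$), yet the level set $\{\dist(\cdot,\partial M)=\tfrac12\}$ is a connected capped cylinder of radius $\tfrac12$ and length $\approx L$, whose diameter is $\approx L$ and hence unbounded as $L\to\infty$. This also destroys your parenthetical claim that $\dist(\cdot,\partial M)\leq 2$ forces a Bonnet--Myers diameter bound on $M$: the inradius bound is real (it is the Ricci/Heintze--Karcher comparison the paper also uses), but the cylinder has inradius $1$ and unbounded diameter. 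Likewise, the Gauss--Bonnet portion of your argument is not salvageable: a cylinder is intrinsically flat ($K\equiv 0$), so $\int K$ gives no control, and the total area of the level set grows linearly in $L$, so the tension you are hoping for between quadratic-in-radius local area growth and a global area bound never materializes. More fundamentally, mean curvature lower bounds plus ambient $\Ric\geq 0$ give no control on the full second fundamental form (only its trace), so no Schoen--Yau-type estimate applies to a level set of a distance function; the Riccati structure you invoke constrains the normal evolution of the shape operator, not its tangential size at fixed $t$.

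The paper avoids exactly this trap. Rather than sweeping out $M$ by distance to $\partial M$, it first decomposes $M$ along two-sided stable and index-one free boundary minimal surfaces into \emph{geometrically prime} pieces $(N,\partial_r N,T,g)$ (Lemma~\ref{lem:cut}, Proposition~\ref{prop:cut two side to get prime}), in which every closed curve bounds a surface relative to the portion $T$. Within each prime piece the Morse function is the distance to the unstable component $T_0$ of $T$ (or to a fixed point), and the diameter control for its level sets comes from the Gromov--Lawson trick (Propositions~\ref{prop:non-empty T_0} and~\ref{prop:distance bounds}): given two points on the same level, one fills in the associated closed curve by a stable minimal surface and applies the radius bound of Theorem~\ref{thm:compact boundary} to that filling surface, not to the level set itself. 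It is the stable minimal \emph{filling} surfaces, not the level sets, that obey a Schoen--Yau-type estimate. In your cylinder example, this means cutting along cross-sectional minimal disks first, after which each piece has short level sets by construction. The decomposition step is genuinely nontrivial (it requires min-max theory to produce index-one surfaces, and an exhaustion argument to handle the noncompact case), and there is no way to bypass it by working directly with $\dist(\cdot,\partial M)$.
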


	We remark that the condition of non-negative Ricci curvature can not be relaxed to non-negative scalar curvature due to the following example.
	
	\begin{example}
		Let $B_R(x)$ be the Euclidean ball in $\mb R^3$ centered at $x$ with radius $R$. Then $(\mathbb{R}^3\setminus B_{1/3}(0),g)$ is a Riemannian manifold with positive scalar curvature, where \[g_{ij}=(4+\frac{4}{r})^4\delta_{ij}, \ r > 0.\]
		Here $r$ is the distance function to the origin with respect to the Euclidean metric $\delta$. Moreover, its boundary $\partial B_{1/3}(0)$ has mean curvature $H=3>1$ with respect to the outward normal vector field.  However, outside a sufficiently large ball, the manifold is close to the Euclidean spaces, which has infinite Uryson 1-width.
	\end{example}
	
	Constructing a singular foliation by surfaces of controlled size has been successfully used to understand the structure of three dimensional manifolds with positive scalar curvature. Gromov-Lawson \cite{GLPSC} obtained an upper bound of Uryson 1-width for simply connected Riemannian manifolds by considering the level sets of distance function to a fixed point. For closed manifolds with nonnegative Ricci and positive scalar curvature, Marques-Neves \cite{MN12} proved a sharp bound on the area of the maximal leaves. In a recent work \cite{LM20}, Liokumovich-Maximo proved that every closed three manifold with positive scalar curvature admits singular foliations so that each leave has controlled diameter, area and genus. In \cite{Gro19_2}*{\S 3.10, Property A}, Gromov proved the Uryson 1-width upper bound for three dimensional complete (possibly non-compact) Riemannian manifolds with positive scalar curvature. Our main result in this paper is an analog of Gromov's result for compact manifolds with non-negative Ricci and strictly mean convex boundary, which also implies a new proof for \cite{Gro19_2}*{\S 3.10, Property A}.

	\subsection*{Challenges and ideas}The main challenge in our paper is to decompose the manifold into {\em geometrically prime regions}, i.e. those regions where each closed curve bounds a surface relative to a connected boundary component. The idea is to cut the manifold along two-sided stable free boundary minimal surfaces. Unlike the argument in \cite{LM20}, we don't have a bumpy metric theorem for non-compact manifold with the non-negative Ricci curvature preserved. Nevertheless, we can find countably many stable free boundary minimal surfaces so that after cutting along them, each connected component contains only ``trivial'' two-sided stable ones which are isotopic to one of the boundary components. Then for those connected components that are not geometrically prime, we are going to apply min-max theory in the ``core region'' (see \cite{Song19} by A. Song) to find a index one free boundary minimal surfaces that subdivides the the components into two geometrically prime regions.
	
	However, this ``core region'' could be non-compact and there is no general min-max theory for such manifolds. In this paper, to deal with non-compact manifolds, we take a sequence of compact domains that exhaust the manifold; c.f. \cite{Song19}*{\S 3.2}. We perturb the metric in the neighborhood of the new boundary so that the new boundary component becomes a stable free boundary minimal surface. However, the perturbation will also produce more stable free boundary minimal surfaces. More importantly, the diameter bounds for stable/index one surfaces can not be preserved anymore since the Ricci curvature will not be non-negative with respect to the new metric. Fortunately, as these compact domains exhausting the non-compact manifold, the new stable free boundary minimal surfaces are far away from a fixed compact domain. Then by cutting along those surfaces with small area in a suitable order, one can obtain a sequence of compact ``core regions'' converging to the non-compact domain in the sense of Gromov-Hausdorff. Moreover, these ``core regions'' satisfy a weak Frankel property, which is directly from cutting process. By applying the min-max theory to these compact ``core regions'', one can construct a sequence of two-sided free boundary minimal surfaces with index one. By the weak Frankel property, these min-max surfaces should intersect a given domain, which implies the limit of this sequence of surfaces is non-empty. Hence such a sequence of surfaces are actually free boundary minimal surfaces with respect to the original metric. This gives the desired surfaces.
	
	\medskip
	The remaining issue is to adapt Gromov-Lawson' trick to each geometrically prime region. Then we require a uniform diameter bound for the two-sided stable/index one free boundary surfaces that we have cut. Recall that the length of boundaries of these surfaces have been bounded by Ambrozio-Buzano-Carlotto-Sharp \cite{ABCS}*{Lemma 48}, which is still far from the diameter bound. In this paper, we obtain a general {\em radius bound} (i.e. distance bound from interiors to boundaries) for any smooth surfaces with bounded mean curvature. Suppose on the contrary that there exists a surface-with-boundary that has large radius. Then regarding such a surface as a barrier, by a minimizing process in some relative homology class, there is a stable constant mean curvature surface with mean curvature 1, whose possible boundaries are far away from a interior point. By applying Schoen-Yau's trick \cite{SY83} here, such a cmc surface has a uniform radius bound, which implies that it is closed. Clearly, in Riemannian manifolds with non-negative Ricci curvature, there is no closed stable cmc surface. Such a contradiction gives the radius bound for any surface-with-boundary. Combining with the length bound of boundaries of free boundary minimal surfaces with index one, we then obtain the diameter upper bounds for these surfaces.

	\subsection*{Outline}This paper is organized as follows. In Section \ref{sec:pre}, we prove a ``radius'' bound for each embedded surface with bounded mean curvature. Then combining the length estimates in \citelist{\cite{ABCS}\cite{CF20}}, we obtain a diameter upper bound for two-sided free boundary minimal surface with index less than or equal to 1. In the second part of this section, we state the diameter estimates for the level sets of distance functions in geometrically prime regions. In Section \ref{sec:decomp}, we decompose three dimensional manifolds with nonnegative Ricci and strictly positive mean  curvature into countably many geometrically prime regions. The most technical part is Proposition \ref{prop:cut two side to get prime}, where we will use the min-max theory to produce free boundary minimal surfaces of index one. Finally, in Section \ref{sec:bound}, we construct the desired function in each geometrically prime region and then glue them together to get the desired function. For the sake of completeness, we adapt Gromov-Lawson's trick in Appendix \ref{sec:GL trick}, which is parallel to \cite{LM20}*{Lemma 4.1}.

	\subsection*{Acknowledgement} We would like to thank Professors Yevgeny Liokumovich and Xin Zhou for many helpful discussions. Z.W. would like to thank Professors Ailana Fraser and Jingyi Chen for their interests. B.Z. would like to express his deep gratitude to his advisor Jiaping Wang for his expert guidance and encouragement.

	\section{Preliminaries}\label{sec:pre}
	
	\subsection{Stable free boundary minimal surfaces}
	In this subsection, we will prove a diameter upper bound for stable minimal surfaces in a complete, three dimensional manifold $(M,\partial M,g)$ with nonnegative Ricci curvature and strictly mean convex boundary. Note that  {by} \citelist{\cite{CF20}*{Proposition 1.8} \cite{ABCS}*{Lemma 48}}, the length of boundary of a two-sided stable free boundary minimal surface is bounded even if $M$ is non-compact. Therefore, the diameter upper bound of these surfaces would follow from an upper bound on the distance to their boundaries for all interiors.

	Recall that Schoen-Yau \cite{SY83} proved a diameter upper bound for stable minimal surfaces in three dimensional manifolds with strictly positive scalar curvature. By adapting their arguments, one can obtain a diameter upper bound for stable constant mean curvature (cmc) surfaces in three dimensional manifolds with non-negative scalar curvature. We state the result here and refer to \cite{LZ18}*{Proposition 2.2} for more details.
	
	\begin{proposition}\label{lem:bound radius of cmc}
		Let $(M^3,\partial M,g)$ be a three dimensional Riemannian manifold with nonempty boundary and scalar curvature $R_M \geq 0$ and $\Sigma$ be a connected, embedded, compact stable cmc surface with mean curvature $H>0$. Then for any $x \in \Sigma$,
		\begin{equation}\label{diameter}
		\dist_{\Sigma}(x,\partial \Sigma)\leq \frac{4\pi}{3 H}.
		\end{equation}
	\end{proposition}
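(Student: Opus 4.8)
The plan is to adapt the stability argument of Schoen--Yau \cite{SY83}; I first put the stability inequality in a convenient form. Since $\Sigma$ is a stable cmc surface with mean curvature $H>0$, it is a critical point of $\mc A-H\mc V$, and stability means that the Jacobi quadratic form
\[
Q(\phi,\phi)=\int_\Sigma\bigl(|\nabla_\Sigma\phi|^2-(|A|^2+\Ric_M(\n,\n))\phi^2\bigr)\,d\mc H^2
\]
is nonnegative for every $\phi\in C_c^\infty(\Sigma\setminus\partial\Sigma)$, where $A$ and $\n$ are the second fundamental form and a unit normal of $\Sigma$; that is, $\int_\Sigma|\nabla_\Sigma\phi|^2\,d\mc H^2\geq\int_\Sigma(|A|^2+\Ric_M(\n,\n))\phi^2\,d\mc H^2$.

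Next I would use the Gauss equation to eliminate $\Ric_M$. For $\Sigma^2\subset M^3$ one has $K_\Sigma=\tfrac12 R_M-\Ric_M(\n,\n)+\tfrac12(H^2-|A|^2)$, so that
\[
|A|^2+\Ric_M(\n,\n)=\tfrac12 R_M+\tfrac12 H^2+\tfrac12|A|^2-K_\Sigma\ \geq\ \tfrac34 H^2-K_\Sigma,
\]
using $R_M\geq0$ and the Cauchy--Schwarz bound $|A|^2\geq\tfrac12 H^2$ on the principal curvatures. Substituting this into the stability inequality yields the spectral estimate
\[
\int_\Sigma\bigl(|\nabla_\Sigma\phi|^2+K_\Sigma\phi^2\bigr)\,d\mc H^2\ \geq\ \tfrac34 H^2\int_\Sigma\phi^2\,d\mc H^2, \qquad \phi\in C_c^\infty(\Sigma\setminus\partial\Sigma),
\]
i.e.\ the operator $-\Delta_\Sigma+K_\Sigma$ has first Dirichlet eigenvalue at least $\tfrac34 H^2$ on the interior of $\Sigma$.

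It then remains to deduce the radius bound \eqref{diameter} from this spectral estimate, which is where the Schoen--Yau device enters. Fix $x$ in the interior of $\Sigma$ and suppose, for contradiction, that $D:=\dist_\Sigma(x,\partial\Sigma)>\tfrac{4\pi}{3H}$. With $\rho=\dist_\Sigma(x,\cdot)$, $\ell(s)=\mc H^1(\rho^{-1}(s))$ and $F(s)=\int_{\{\rho<s\}}K_\Sigma\,d\mc H^2$, testing the spectral estimate with $\phi=\psi\circ\rho$ for a nonincreasing $\psi\colon[0,R]\to[0,\infty)$ with $\psi(R)=0$ and $R<D$, and using the coarea formula, turns it into a one-dimensional integral inequality in $\psi,\ell,F$. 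Combining this with the first variation of length and the Gauss--Bonnet theorem on the metric balls $\{\rho<s\}$---where the cut locus of $x$ contributes only with the favorable sign, giving the classical inequality $F(s)\leq 2\pi\chi(\{\rho<s\})-\ell'(s)\leq 2\pi-\ell'(s)$ distributionally---produces a differential inequality for $\ell$. A Sturm/Bonnet--Myers-type comparison with the model ODE associated to the constant $\tfrac34 H^2$ (testing with $\psi$ of the form of a rescaled power of a cosine) then contradicts the positivity of $\ell$ on $(0,D)$ and forces $D\leq\tfrac{4\pi}{3H}$; this is the scheme carried out in \cite{SY83} and, for stable cmc surfaces, in \cite{LZ18}*{Proposition 2.2}.

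The hard part is this final step. Unlike in a genuine Bonnet--Myers argument one has no pointwise curvature lower bound along a geodesic, only the integrated/spectral condition, so the comparison has to be run together with the Gauss--Bonnet control of the level-set lengths $\ell(s)$; in addition one must treat the cut locus of $x$ with care and extract the explicit constant $\tfrac{4\pi}{3H}$. By contrast, the reduction to the spectral estimate carried out in the first two steps is entirely formal.
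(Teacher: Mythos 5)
Your proposal is correct and follows exactly the route the paper indicates: the paper does not prove Proposition~\ref{lem:bound radius of cmc} at all, but rather refers the reader to the Schoen--Yau argument as adapted in \cite{LZ18}*{Proposition 2.2}, and your sketch is a faithful reconstruction of that adaptation. The algebraic reduction is right --- the Gauss equation identity $|A|^2+\Ric_M(\n,\n)=\tfrac12R_M+\tfrac12H^2+\tfrac12|A|^2-K_\Sigma$, together with $R_M\geq 0$ and $|A|^2\geq\tfrac12H^2$, yields $\int_\Sigma(|\nabla_\Sigma\phi|^2+K_\Sigma\phi^2)\geq\tfrac34H^2\int_\Sigma\phi^2$, and feeding $\lambda=\tfrac34H^2$ into the Schoen--Yau coarea/Gauss--Bonnet mechanism (whose radius bound scales as $2\pi/\sqrt{3\lambda}$) recovers the stated $4\pi/(3H)$ --- and you correctly flag the cut-locus and level-set-length analysis as the nontrivial step that one ultimately delegates to \cite{SY83} and \cite{LZ18}.
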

	
	Observe that each surface with mean curvature bounded by 1 can be a one-sided barrier for constant mean curvature surface with $H=1$. Thus we can construct a stable cmc surface by a minimizing process if there is a ``large'' surface-with-boundary having bounded mean curvature. Then Proposition \ref{lem:bound radius of cmc} implies the closeness of such a minimizer which contradicts the non-negative Ricci curvature. 
	
	\begin{theorem}\label{thm:compact boundary}
		Let $(M^3,\partial M,g)$ be a compact Riemannian manifold with nonempty boundary. Suppose that $\Ric\geq 0$ and $H_{\partial M}\geq 1$. Let $\Sigma$ be an embedded surface with $|H_{\Sigma}|\leq 1$. Then 
		\[  \sup_{x\in\Sigma} \dist_M(x,\partial \Sigma)\leq \frac{4\pi}{3}+2.\]
		In particular, $\Sigma$ is compact if and only if its boundary is compact.
	\end{theorem}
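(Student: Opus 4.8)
The plan is to argue by contradiction: assume some $x\in\Sigma$ satisfies $\dist_M(x,\partial\Sigma)>\frac{4\pi}{3}+2$, and manufacture from $\Sigma$ a stable surface of constant mean curvature $1$, whose existence is incompatible with $\Ric\ge 0$. Some preliminary reductions first: since $\dist_M(x,\partial\Sigma)>\frac{4\pi}{3}+2$, the part of $\Sigma$ within that distance of $x$ is a surface without boundary lying in $\mathrm{int}(M)$; replacing $\Sigma$ by the boundary of a thin one-sided tubular neighbourhood if it is one-sided, we may assume it is embedded and two-sided near $x$, so that locally it separates, cutting off a region $\Upsilon$ with $x\in\partial\Upsilon$. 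In $\Upsilon$ I would run a $\mu$-bubble (perimeter-minus-volume) minimisation: minimise $\mathcal F(\Omega)=\mathcal H^2(\partial^*\Omega\cap\Upsilon)-\mathcal H^3(\Omega)$ over Caccioppoli sets $\Omega\subseteq\overline\Upsilon$, treating the ``walls'' $\Sigma$ (where $|H_\Sigma|\le 1$) and $\partial M$ (where $H_{\partial M}\ge 1$) as free boundary so that they are not counted in the perimeter, and working in a relative class / with normalisations near $x$ chosen so that the minimiser is neither empty nor all of $\Upsilon$. Standard compactness and lower semicontinuity give a minimiser $\Omega_0$; as $\dim M=3$ its reduced boundary is smooth and embedded, the free part $\Sigma':=\partial^*\Omega_0\cap\Upsilon$ has constant mean curvature $1$ (with respect to the outward normal of $\Omega_0$), and its possible boundary lies on $\Sigma\cup\partial M$. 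Since $\mathcal F$ carries no volume constraint, $\Sigma'$ is \emph{stable}: $\int_{\Sigma'}(|A_{\Sigma'}|^2+\Ric(\nu,\nu))\varphi^2\le\int_{\Sigma'}|\nabla\varphi|^2$ for all $\varphi\in C^\infty_c(\Sigma')$, and for all $\varphi\in C^\infty(\Sigma')$ when $\Sigma'$ is closed; in particular Proposition~\ref{lem:bound radius of cmc} applies to it with $H=1$.

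The geometric heart is to \emph{locate} $\Sigma'$, for which I would establish two things. First, that $\Sigma'$ comes within $M$-distance $2$ of $x$: this uses both the choice of competitor class, which tethers $\Omega_0$ to a neighbourhood of $x$, and the fact that under $\Ric\ge 0$ and $H_{\partial M}\ge 1$ a compact $M^3$ obeys the inradius bound $\dist_M(\cdot,\partial M)\le 2$, ruling out ``long thin tube'' degeneracies that would let $\Omega_0$ bulge arbitrarily far from $x$. Second, that no boundary curve of $\Sigma'$ is closer to $x$ than $\partial\Sigma$ is, i.e.\ $\dist_M(x,\partial\Sigma')\ge\dist_M(x,\partial\Sigma)$; this is carried out by the strong maximum principle against the walls, since $|H_\Sigma|\le 1=H_{\Sigma'}$ and $H_{\partial M}\ge 1=H_{\Sigma'}$, in the orientation forced by $\Omega_0$, prevent $\Sigma'$ from developing free boundary on the portions of $\Sigma$ and of $\partial M$ that lie near $x$.

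Granting the location statement, the proof closes in two lines. Choose $x'\in\Sigma'$ with $\dist_M(x,x')\le 2$. If $\partial\Sigma'\ne\varnothing$, then $\dist_{\Sigma'}(x',\partial\Sigma')\ge\dist_M(x',\partial\Sigma')\ge\dist_M(x,\partial\Sigma')-2\ge\dist_M(x,\partial\Sigma)-2>\frac{4\pi}{3}$, contradicting Proposition~\ref{lem:bound radius of cmc}. Hence $\partial\Sigma'=\varnothing$ and $\Sigma'$ is a closed stable surface of constant mean curvature $1$; plugging $\varphi\equiv 1$ into the stability inequality and using $\Ric\ge 0$ forces $A_{\Sigma'}\equiv 0$, so $H_{\Sigma'}=0\ne 1$, a contradiction. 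This proves $\dist_M(x,\partial\Sigma)\le\frac{4\pi}{3}+2$. The last assertion follows at once: in compact $M$ a properly embedded $\Sigma$ is closed, hence compact, so $\partial\Sigma$ is compact; and if $\partial\Sigma$ is compact, the distance bound confines $\Sigma$ to a compact neighbourhood of $\partial\Sigma$, so $\overline\Sigma$ — and therefore $\Sigma$ — is compact.

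The main obstacle is the middle step: designing the constrained minimisation so that the resulting constant-mean-curvature-$1$ surface is genuinely tethered to a bounded neighbourhood of $x$ and yet cannot run its free boundary back toward $x$ along $\Sigma$ or along $\partial M$. This forces care both about the admissible competitor class (and the accompanying existence and nontriviality of the minimiser) and about the free-boundary maximum principle against the two walls; it is precisely there that the numerical slack ``$+2$'' — together with the inradius estimate that justifies it — gets pinned down. By contrast, the interior regularity and stability of the $\mu$-bubble are standard codimension-one facts, and the concluding contradiction is a one-line application of Proposition~\ref{lem:bound radius of cmc} together with the stability inequality.
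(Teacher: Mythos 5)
Your overall strategy matches the paper's: minimise area-minus-volume to produce a stable constant-mean-curvature-$1$ surface tethered near a point at excessive distance from $\partial\Sigma$, invoke the radius bound of Proposition~\ref{lem:bound radius of cmc} (with $H=1$) to force that surface to be closed, and contradict $\Ric\geq 0$ via the stability inequality. The inradius estimate $\dist_M(\cdot,\partial M)\leq 2$, which you cite, plays the same role in both arguments. So the ingredients are all correct and essentially the paper's.

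What is missing -- and what you yourself flag as the ``main obstacle'' -- is the concrete mechanism for both the tethering and the location of $\partial\Sigma'$. The paper does not attempt a maximum-principle argument pushing $\partial\Sigma'$ away from $x$ along the walls $\Sigma$ and $\partial M$; indeed the one-sided bounds $|H_\Sigma|\leq 1$, $H_{\partial M}\geq 1$ only give an obstacle-type regularity at the contact set (via Morgan's result) and do not by themselves yield a lower bound on $\dist_M(x,\partial\Sigma')$. Likewise, merely constraining the competitor class ``near $x$'' need not put a point of $\partial\Omega_0$ near $x$, since $\Omega_0$ can be disconnected. The paper avoids both difficulties by first truncating: set $M_1 = M\cap B(p,\tfrac{4\pi}{3}+2+2\epsilon)$, cut $M_1$ along $\Sigma$ to get $M_2$, and designate the cut sphere $T=\overline{\partial B(p,r)\cap\mathrm{Int}\,M}$ as the only boundary locus allowed for the cmc surface; $T$ is far from $p$ by construction, and $\Sigma'$, $\partial M$ serve only as obstacles. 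The tethering then becomes explicit: by the inradius bound there is a short curve $\gamma'$ of length $\leq 2+\epsilon$ from $\Sigma'$ to $\partial M$ having algebraic intersection number $1$ with $\partial\Omega$ for any admissible $\Omega\supset\Sigma'$, whence the component $\Gamma$ of $\partial\Omega\setminus T$ hitting $\gamma'$ contains a point $q$ with $\dist(p,q)\leq 2+\epsilon$, and $\partial\Gamma\subset T$ is at distance $>\tfrac{4\pi}{3}+\epsilon$ from $q$. This cut-sphere-plus-intersection-number device is the piece your sketch leaves open; once it is supplied your argument coincides with the paper's.
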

	\begin{proof}
		Suppose not, then there exist $p\in\Sigma$ and $\epsilon>0$ such that
		\[ \dist_{M}(p,\partial \Sigma)> \frac{4\pi}{3}+2+2\epsilon.\]
		Let $M_1=M\cap B(p,\frac{4\pi}{3}+2+2\epsilon)$ and $T$ denotes the closure of $\partial M_1\cap \mathrm{Int}M$. Here we may assume that $T$ is transverse to $\partial M$. 
		
		Now we cut $M_1$ along $\Sigma$ and denote by $M_2$ the metric completion of $M_1\setminus \Sigma$. So long as $\Sigma$ separates $M_1$, we choose one of the connected components of $M_1\setminus \Sigma$, still denoted by $M_2$. Then we set $\Sigma'=\partial M_2\setminus \partial M_1$, which belongs to one of the following:
		\begin{itemize}
			\item $\Sigma'$ is a double cover of $\Sigma$;
			\item $\Sigma'$ is diffeomorphic to $\Sigma$;
			\item $\Sigma'$ has two connected components and each component is diffeomorphic to $\Sigma$. 
		\end{itemize}
		In each case, there exists a point (still denoted by $p$) so that 
		\begin{equation}\label{eq:p to T}
		\dist_{M_2}(p,T\cap M_2)>\frac{4\pi}{3}+2+2\epsilon.
		\end{equation} 
		Note that by Ricci comparison theorem \cite{Pet}*{Chapter 9, Proposition 39}, $\dist_M(p,\partial M)\leq 2$ since $M$ has nonnegative Ricci curvature. Then there exists a smooth curve $\gamma:[0,1]\to M_2$ with
		\begin{equation}\label{eq:gamma to p}
		\gamma(0)=p, \ \ \gamma(1)\in M_2\cap\partial M \ \ \text{ and } \ \ \mathrm{Length}(\gamma)\leq 2+\epsilon.
		\end{equation}
		Let $t_0\in[0,1]$ so that $\gamma(t_0)\in \Sigma'$ and $\gamma\notin\Sigma'$ for all $t\in(t_0,1)$. It follows that $\gamma'=\gamma|_{[t_0,1]}$ intersects $\Sigma'$ with algebraic intersection number $1$. Now we consider the minimizing problem of the following functional
		\[  \mc A^1(\Omega'):=\mc H^2(\partial \Omega'\setminus (\Sigma'\cup T))-\mc H^3(\Omega')\]
		among all domains $\Omega'\subset M_2$ that contain $\Sigma'$. Let $\Omega$ be a minimizer of $\mc A^1$. Then by \cite{Mor03}*{Corollary 3.8}, $\partial \Omega\setminus T$ is a smooth, embedded, stable cmc surface because $H_{\partial M}\geq 1$ and $|H_{\Sigma}|\leq 1$. Note that $\partial \Omega$ intersects $\gamma'$ with algebraic intersection number 1. Let $\Gamma$ be a connected component of $\partial \Omega\setminus T$ that intersects $\gamma'$. It follows that $\partial \Gamma\subset T$. 
		
		Now we take $q\in \gamma'\cap \Gamma$. By \eqref{eq:p to T} and \eqref{eq:gamma to p}, together with triangle inequalities,
		\[  \dist_{M_2}(q,T)\geq \dist_{M_2}(p,T\cap M_2)-\dist_M(p,q)> \frac{4\pi}{3}+\epsilon.\]
		Then applying Proposition \ref{lem:bound radius of cmc}, $\partial \Gamma=\emptyset$ since $\Gamma$ is stable. Thus we conclude that $\Gamma$ is a closed embedded stable cmc surface and then it contradicts  $\mathrm{Ric}\geq 0$. Hence, this completes the proof of Theorem \ref{thm:compact boundary}.
		
	\end{proof}

	Observe that \cite{ABCS}*{Lemma 48} gives an upper bound of length of boundary for two-sided, free boundary minimal surfaces with index 1. Moreover, the number of connected components is also uniformly bounded. Together with Theorem \ref{thm:compact boundary}, we obtain a diameter upper bound for free boundary minimal surface with index less than or equal to 1.
	
	\begin{lemma}[c.f. \citelist{\cite{CF20}*{Proposition 1.8} \cite{ABCS}*{Lemma 48}}]\label{lem:stable fbms}
		Let $(M^3,\partial M,g)$ be a complete Riemannian manifold with non-empty smooth boundary and $\Ric(g)\geq 0$, $H_{\partial M}\geq 1$. Let $\Sigma$ be a two-sided, embedded free boundary minimal surface in $M$.
		\begin{enumerate}
			\item\label{item:2-sided} If $\Sigma$ is stable, then $\Sigma$ is a disk and 
			\[ \sup_{x,y\in\Sigma}\dist_\Sigma(x,y)\leq \pi+\frac{8}{3}.\]
			\item\label{item:1-sided} If $\Sigma$ has index one, then
			\[ \sup_{x,y\in \Sigma}\dist_M(x,y)\leq \frac{59\pi}{3}+28.  \]
		\end{enumerate}
	\end{lemma}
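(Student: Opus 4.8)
The plan is to read off both items from the two available ``black boxes'': the radius estimate (Proposition \ref{lem:bound radius of cmc}, encapsulated for our setting in Theorem \ref{thm:compact boundary}) on the one hand, and the boundary estimates of \cite{CF20}*{Proposition 1.8} and \cite{ABCS}*{Lemma 48} on the other. The only genuinely new step is to upgrade a bound on the distance from interior points to $\partial\Sigma$ to a bound on the full diameter.

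For item \eqref{item:2-sided} I would follow \cite{CF20}*{Proposition 1.8}. Feed test functions into the second variation inequality of the free boundary minimal surface $\Sigma$; rewrite the interior integrand $|A_\Sigma|^2+\Ric_M(\nu,\nu)$ via the Gauss equation as $\tfrac12 R_M+\tfrac12|A_\Sigma|^2-K_\Sigma$; and use the free boundary relations along $\partial\Sigma$, namely that the geodesic curvature of $\partial\Sigma$ inside $\Sigma$ equals $\mathrm{II}_{\partial M}(\tau,\tau)$ and that orthogonality gives $\mathrm{II}_{\partial M}(\tau,\tau)+\mathrm{II}_{\partial M}(\nu,\nu)=H_{\partial M}\ge 1$. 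Since $R_M\ge 0$ and $|A_\Sigma|^2\ge 0$, the constant test function together with the Gauss--Bonnet theorem forces $\chi(\Sigma)>0$, hence $\Sigma$ is a disk, and at the same time bounds $\mathrm{Length}(\partial\Sigma)$; a radial test function (the Schoen--Yau choice of a function of $\dist_\Sigma(\cdot,\partial\Sigma)$) then bounds the intrinsic distance from every interior point of the disk to $\partial\Sigma$. Joining any two points of the disk by going to $\partial\Sigma$, travelling halfway around $\partial\Sigma$, and coming back, the triangle inequality yields $\pi+\tfrac83$. I would not reproduce the constant tracking, which is exactly as in \cite{CF20}.

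For item \eqref{item:1-sided}, let $\Sigma$ have index one. By \cite{ABCS}*{Lemma 48} the number of boundary components of $\Sigma$ and the total length $\mathrm{Length}(\partial\Sigma)$ are bounded by constants depending only on the index and on $H_{\partial M}\ge 1$; moreover at most one connected component of $\Sigma$ can be unstable, and we may assume $\Sigma$ is connected (the stable components are disks of intrinsic diameter $\le\pi+\tfrac83$ by item \eqref{item:2-sided}, and by the Frankel property for free boundary minimal surfaces, valid since $\Ric_M\ge 0$ and $H_{\partial M}>0$, the components pairwise intersect, so the general bound follows from the connected case). On this index-one surface I would apply Theorem \ref{thm:compact boundary} — after exhausting $M$ by compact subdomains if $M$ is non-compact, which is harmless since $|H_\Sigma|=0\le 1$ and $\partial\Sigma$ has finite length, so $\Sigma$ is compact — to get $\dist_M(x,\partial\Sigma)\le\tfrac{4\pi}{3}+2$ for every $x\in\Sigma$. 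To convert this into a diameter bound: given $x,y\in\Sigma$, pick boundary points $p,q$ with $\dist_M(x,p),\dist_M(y,q)\le\tfrac{4\pi}{3}+2$, and note that along a path in the connected surface $\Sigma$, whenever the nearest boundary circle changes the two circles in question lie within $2(\tfrac{4\pi}{3}+2)$ of each other in $M$; chaining through the boundedly many boundary circles, each of bounded length, bounds $\dist_M(p,q)$, hence $\dist_M(x,y)$, by $\tfrac{59\pi}{3}+28$.

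The main obstacle is precisely this last passage from a radius bound to a diameter bound in item \eqref{item:1-sided}: the interior radius estimate of Theorem \ref{thm:compact boundary} says nothing directly about distances between distinct boundary components of $\Sigma$, so one must exploit connectedness of $\Sigma$ (via Frankel) together with the chaining argument above, and then verify that the resulting arithmetic, fed by the explicit constants of \cite{ABCS}*{Lemma 48}, actually closes up to $\tfrac{59\pi}{3}+28$. Item \eqref{item:2-sided} is comparatively routine once \cite{CF20}*{Proposition 1.8} is invoked.
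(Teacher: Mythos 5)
Your proposal matches the paper's proof in all essential respects: item \eqref{item:2-sided} is cited verbatim from \cite{CF20}*{Proposition 1.8}, and for item \eqref{item:1-sided} you use exactly the paper's two ingredients — the boundary-length/component bound from \cite{ABCS}*{Lemma 48} and the radius estimate $\dist_M(x,\partial\Sigma)\le\tfrac{4\pi}{3}+2$ from Theorem~\ref{thm:compact boundary} — and the same chaining through the at most $r\le 7$ boundary circles, arriving at the same arithmetic $2r(\tfrac{4\pi}{3}+2)+(8-r)\pi\le\tfrac{59\pi}{3}+28$. One small caution on your reduction to the connected case: you invoke a Frankel property ``valid since $\Ric_M\ge 0$ and $H_{\partial M}>0$,'' but the free boundary Frankel theorem is known under \emph{convex} boundary ($\mathrm{II}_{\partial M}>0$), not mere mean convexity — indeed the paper does not assume Frankel here and instead lists it as a separate hypothesis \ref{item:frankel} in Section~\ref{sec:decomp}; in the paper the connectedness of $\Sigma$ is simply implicit (the lemma is applied to connected min-max surfaces), so your Frankel detour is unnecessary and, as justified, not quite correct.
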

	\begin{proof}
		The statement \eqref{item:2-sided} is given by Carlotto-Franz \cite{CF20}*{Proposition 1.8}. Therefore, it suffices to prove statement $(2)$ as follows.
		
		Since $\Sigma$ has index one, then by \cite{ABCS}*{Lemma 48}, 
		\[ |\partial\Sigma|\leq 2(8-r)\pi,\]
		where $r\geq 1$ is the number of the connected components of $\partial \Sigma$. Denote by $C_1,\cdots, C_r$ the connected components of $\partial \Sigma$. Then by Theorem \ref{thm:compact boundary}, for each $C_i$, there exists $C_j\neq C_i$, so that 
		\[\dist_{ M}(C_i,C_j)\leq \frac{8\pi}{3}+4.\] 
		Note that Theorem \ref{thm:compact boundary} gives that for any $x\in \Sigma$,
		\[ \dist_{ M}(x,\partial \Sigma)\leq \frac{4\pi}{3}+2.\]
		{Thus for any $x,y\in \Sigma$,
			\[\dist_{ M}(x,y)\leq r(\frac{8\pi}{3}+4)+\frac{1}{2}|\partial \Sigma|\leq 2r(\frac{4\pi}{3}+2)+(8-r)\pi.  \]
		}
		Since $\partial \Sigma$ is non-empty, we obtain that $r\leq 7$. It follows that
		\[ \dist_{ M}(x,y)\leq \frac{59\pi}{3}+28. \]
		The statement \eqref{item:1-sided} is proved.
	\end{proof}

	\subsection{Geometrically prime regions}
	In this part, we will introduce a class of manifolds obtained from  manifolds by cutting along properly embedded free boundary minimal surfaces, which will be used in the next sections. The new boundary components generated from cutting process are called  {\em portions}. Precisely, we introduce the following definition.
	
	\begin{definition}
		$(N,\partial_rN,T,g)$ is said to be a Riemannian manifold with {\em relative boundary $\partial_rN$ and portion $T$} if 
		\begin{enumerate}[label=(\arabic*)]
			\item  $\partial_rN\cup T$ is exactly the topological boundary of $N$ and $(N,\partial_rN\cup T,g)$ is a Riemannian manifold with piecewise smooth boundary;
			\item  $\partial_rN$ and $T$ are smooth hypersurfaces
			;
			\item $\partial_rN\cap  \mathrm{Int}(T) =\emptyset$ and $\partial_rN$ is transverse to $T$.
		\end{enumerate}
	\end{definition}
	
	Recall that $(\Sigma,\partial \Sigma)\subset(N,\partial_rN)$ always denotes a surface in $N$ with  boundary $\partial \Sigma\subset \partial_rN$. Let $(\Sigma,\partial \Sigma)\subset (N,\partial_rN,T,g)$ be an embedded free boundary minimal surface and $N'$  the metric completion of $N\setminus \Sigma$. Conventionally, we always let 
	\[ \partial_rN'=\partial N\cap N' \ \ \text{ and } \ \ T'=\partial N'\setminus \partial_rN.\]
	Clearly, $(N',\partial_rN',T',g)$ is a Riemannian manifold with relative boundary and portion.
	
	\medskip
	For Riemannian manifolds with relative boundary and portion, we generalize the concept of geometrically prime manifolds given by Liokumovich-Maximo \cite{LM20}*{Definition 2.4}. 
	\begin{definition}\label{def:geo prime}
		Let $(N^3,\partial_rN,T,g)$ be a Riemannian manifold with relative boundary $\partial_rN$ and portion $T$. Denote by $T_0$ the union of  connected components of $T$ that are unstable free boundary minimal surfaces. Then $N$ is said to be {\em geometrically prime} if 
		\begin{enumerate}
			\item $T_0$ is a connected free boundary minimal surface of Morse index 1 unless  $T_0=\emptyset$;
			\item every  closed curve $\gamma$ bounds {\em a surface $\Gamma$ in $N$ relative to $T$}, i.e. $\partial \Gamma\setminus T=\gamma$.
		\end{enumerate}
	\end{definition}

	For geometrically prime Riemannian manifolds with non-empty boundary and portion, we adapt Gromov-Lawson's trick \cite{GLPSC} to bound the diameter of level sets of the distance functions. We state the results here and will defer the details in Appendix \ref{sec:GL trick} for the sake of completeness.
	\begin{proposition}\label{prop:non-empty T_0}
		Let $(N^3,\partial_rN,T,g)$ be a three dimensional geometrically prime Riemannian manifold with non-empty relative boundary $\partial_rN$. Suppose
		\begin{itemize}
			\item $\Ric(g)\geq 0$, $H_{\partial_rN}\geq 1$;
			\item $T_0\neq \emptyset$ and $\mk d$ is the distance function to $T_0$.
		\end{itemize} 
		Then for any continuous curve $\gamma:[0,1]\to N$ with 
		\[\mk d(\gamma(0))=\mk d(\gamma(1))\leq \mk d(\gamma(t)) \text{ \ \  for all \ } t\in[0,1],  \]
		we have
		\[  \dist_N(\gamma(0),\gamma(1))\leq 25\pi+36.\]
		
	\end{proposition}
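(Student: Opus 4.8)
The plan is to split on the size of $c:=\mk d(\gamma(0))=\mk d(\gamma(1))$. In the regime $c\le\tfrac{8\pi}{3}+4$ I would connect $\gamma(0)$ to $\gamma(1)$ directly through the core $T_0$; this is the case that produces the numerical constant. In the regime $c>\tfrac{8\pi}{3}+4$ I would adapt Gromov--Lawson's trick \cite{GLPSC} (cf.\ \cite{LM20}*{Lemma~4.1}): the ascending-then-descending curve $\gamma$, together with the hypothesis that $N$ is geometrically prime, should serve as a topological anchor for a constant-mean-curvature minimization inside the sublevel region $\{\mk d\le c\}$, from which a stable constant-mean-curvature surface of controlled geometry is produced and a contradiction with $\Ric(g)\ge0$ derived, so that this regime in fact does not occur.

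For the regime $c\le\tfrac{8\pi}{3}+4$: since $T_0$ is a two-sided free boundary minimal surface of Morse index $1$, Lemma~\ref{lem:stable fbms}\eqref{item:1-sided} gives $\sup_{x,y\in T_0}\dist_N(x,y)\le\tfrac{59\pi}{3}+28$; in particular $T_0$ is compact, so I may choose $x_0,x_1\in T_0$ with $\dist_N(\gamma(0),x_0)=\dist_N(\gamma(1),x_1)=c$. Concatenating a minimizing geodesic from $\gamma(0)$ to $x_0$, a path in $T_0$ from $x_0$ to $x_1$, and a minimizing geodesic from $x_1$ to $\gamma(1)$ yields
\[
\dist_N(\gamma(0),\gamma(1))\ \le\ 2c+\tfrac{59\pi}{3}+28\ \le\ 2\Big(\tfrac{8\pi}{3}+4\Big)+\tfrac{59\pi}{3}+28\ =\ 25\pi+36 .
\]

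For the regime $c>\tfrac{8\pi}{3}+4$ I would argue by contradiction, following the scheme of Theorem~\ref{thm:compact boundary}; fix $\eps>0$ with $c>\tfrac{8\pi}{3}+4+2\eps$. The sublevel region $\Omega:=\{\mk d\le c\}$ is compact ($T_0$ compact, $N$ complete) and should be mean convex: along $\partial_rN\cap\Omega$ one has $H\ge1$, and along the level set $\mk d^{-1}(c)$ the mean curvature with respect to the normal pointing into $\Omega$ is nonnegative by the Riccati comparison, using $\Ric(g)\ge0$ and the minimality of $T_0$, so $\mk d^{-1}(c)$ is an admissible barrier for prescribed-mean-curvature-$1$ problems (the non-smoothness of $\mk d$ and the corner $\partial_rN\cap\mk d^{-1}(c)$ being handled by the standard smoothing and barrier arguments). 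The curve $\gamma$ lies in $\{\mk d\ge c\}$ with $\gamma(0),\gamma(1)\in\mk d^{-1}(c)$, and joining its endpoints to $T_0$ by the minimizing geodesics above produces a closed curve which, by the geometric primeness of $N$, bounds a surface in $N$ relative to the portion $T$. I would feed this homological datum into a minimization of an $\mc A^1$-type functional in $\Omega$, with barriers $\partial_rN$ ($H\ge1$), $\mk d^{-1}(c)$ (mean convex from inside) and $T_0$ (minimal); exactly as in Theorem~\ref{thm:compact boundary} (via \cite{Mor03}*{Corollary~3.8}), the minimizer is a smooth, embedded, stable constant-mean-curvature surface $\Gamma$ of mean curvature $1$ whose boundary lies on $\partial_rN\cup T$, and the anchoring should force $\Gamma$ to contain a point at distance $>\tfrac{4\pi}{3}$ from $\partial\Gamma$. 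Then Proposition~\ref{lem:bound radius of cmc} forces $\partial\Gamma=\emptyset$, so $\Gamma$ is a closed stable constant-mean-curvature surface, contradicting $\Ric(g)\ge0$; hence $c\le\tfrac{8\pi}{3}+4$ and the previous paragraph finishes the proof.

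The hard part is this last step: turning $\gamma$ together with the geometric primeness of $N$ into a homological constraint that survives the minimization — keeping the constant-mean-curvature minimizer non-empty and forcing it to reach a point at distance $>\tfrac{4\pi}{3}$ from its own boundary — while retaining $\mk d^{-1}(c)$ as a valid barrier in spite of the low regularity of $\mk d$. This is the free-boundary, possibly non-compact analogue of \cite{LM20}*{Lemma~4.1}, and the details belong in Appendix~\ref{sec:GL trick}.
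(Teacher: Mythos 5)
Your first regime ($c\le\frac{8\pi}{3}+4$) is exactly the paper's ``Case 1'' and is correct, including the arithmetic $2(\frac{8\pi}{3}+4)+\frac{59\pi}{3}+28=25\pi+36$. Your second regime, however, diverges from the paper in a way that I believe cannot be repaired.

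You assert that the regime $c>\frac{8\pi}{3}+4$ ``in fact does not occur'' and try to derive a contradiction. This is false as a goal: nothing bounds the distance $\mk d(\gamma(0))=\dist_N(\gamma(0),T_0)$ a priori. The Ricci comparison theorem (used in Theorem~\ref{thm:compact boundary}) bounds $\dist_N(\cdot,\partial_rN)$ by $2$ because $H_{\partial_rN}\ge1$, but $T$ is merely minimal, so a point of $N$ can be far from $T_0$ while staying within distance $2$ of $\partial_rN$. Correspondingly, the paper's proof of this proposition does \emph{not} produce a contradiction in this regime; it produces the (even smaller) bound $8\pi+12+4\eps$ on $\dist_N(\gamma(0),\gamma(1))$ directly.

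The mechanism the paper uses in the large-$c$ case is also genuinely different from yours. Rather than a constant-mean-curvature minimization inside the sublevel set $\Omega=\{\mk d\le c\}$, the paper forms the closed curve $\wti\gamma=\overline{\gamma_x\,\gamma\,\gamma_y^{-1}\sigma^{-1}}$ (with $\gamma_x,\gamma_y$ minimizing geodesics from $p_x,p_y\in T_0$ to $\gamma(0),\gamma(1)$ and $\sigma\subset T_0$), invokes geometric primeness to fill it by an area-minimizing surface $\Gamma$ in all of $N$ relative to $T_0$, so $\Gamma$ is a stable \emph{minimal} surface. Theorem~\ref{thm:compact boundary} gives the radius bound $\dist_N(x',\partial\Gamma)\le\frac{4\pi}{3}+2$ on $\Gamma$. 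Choosing a regular value $t'\approx c-\frac{4\pi}{3}-2$ and a component $\alpha$ of $\mk d^{-1}(t')\cap\Gamma$ joining $\gamma_x$ to $\gamma_y$, one notes that $\alpha$ is at distance $>\frac{4\pi}{3}+2$ from $\gamma$ (by the min-level hypothesis on $\gamma$), hence $\alpha$ can only be ``close to $\partial\Gamma$'' through $\gamma_x$ and $\gamma_y$; this yields a point $z\in\alpha$ with $\dist_\Gamma(z,\gamma_x),\dist_\Gamma(z,\gamma_y)\le\frac{4\pi}{3}+2$, and the triangle inequality closes the argument.

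Your CMC minimization inside $\Omega$ runs into several concrete problems beyond the wrong targeted conclusion. First, $\gamma$ lies in $\{\mk d\ge c\}$, so the homological datum you want to impose lives essentially outside $\Omega$ and does not obviously restrict to a nontrivial class in $\Omega$. Second, $\mk d^{-1}(c)$ has mean curvature only $\le 0$ toward the exterior (by Riccati comparison from the minimal hypersurface $T_0$ under $\Ric\ge0$); that is the wrong sign for it to serve as a one-sided barrier for an $H=1$ minimization with the orientation you need, and nothing prevents the minimizer from having boundary on $\mk d^{-1}(c)$. Third, even if a stable $H=1$ surface were produced, you give no mechanism that forces it to contain a point at distance $>\frac{4\pi}{3}$ from its own boundary; the ``anchoring'' is precisely what the paper's level-set trick provides, and there is no analogue of it in your setup. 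The step you flag as ``the hard part'' is in fact the step where the argument breaks.
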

	
	Moreover, if $T$ is stable, the upper bound can be improved by the following proposition. The proof is parallel to Proposition \ref{prop:non-empty T_0}.
	
	\begin{proposition}\label{prop:distance bounds}
		Let $(N^3,\partial_rN,T,g)$ be a three dimensional geometrically prime Riemannian manifold with non-empty relative boundary $\partial_rN$. Suppose that
		\begin{itemize}
			\item $\Ric(g)\geq 0$, $H_{\partial_r N}\geq 1$;
			\item $T$ is stable;
			\item $p\in N$ is a fixed point and $\mk d$ is a distance function from $p$.
		\end{itemize} 
		Then for any continuous curve $\gamma:[0,1]\to N$ with 
		\[\mk d(\gamma(0))=\mk d(\gamma(1))\leq \mk d(\gamma(t)) \text{ \ \  for all \ } t\in[0,1],  \]
		we have
		\[  \dist_N(\gamma(0),\gamma(1))\leq 8\pi+12.\]
		
	\end{proposition}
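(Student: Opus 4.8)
The plan is to run the Gromov--Lawson sweepout argument, using the distance function $\mathfrak d(\cdot)=\dist_N(p,\cdot)$ in place of the distance to $T_0$, and tracking constants carefully since $T$ is now stable rather than index one. First I would recall that in a geometrically prime region every closed curve $\gamma$ bounds a surface $\Gamma$ relative to $T$; in particular the level sets $\mathfrak d^{-1}(t)$ of the distance function from $p$ can be capped off by surfaces relative to $T$. Given a curve $\gamma:[0,1]\to N$ whose endpoints sit at the minimal value of $\mathfrak d$ along $\gamma$, say $\mathfrak d(\gamma(0))=\mathfrak d(\gamma(1))=a$ and $\mathfrak d(\gamma(t))\ge a$ for all $t$, I want to connect $\gamma(0)$ to $\gamma(1)$ by a path of length at most $8\pi+12$. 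The idea, exactly as in \cite{GLPSC} and \cite{LM20}*{Lemma 4.1}, is to consider the region $\{\mathfrak d\le a\}$, which contains both endpoints on its boundary component (or reach down to $p$ itself if convenient), and to minimize area in the homology class of a slice relative to $T$ and relative to $\{\mathfrak d\le a\}$: this produces a stable CMC (in fact minimal, since the mean convexity barrier is $H_{\partial_rN}\ge 1$ and the comparison surface here has $H\le 1$) free boundary surface $S$ whose interior distance to $\partial S\subset T\cup\{\mathfrak d=a\}$ is controlled.

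The key estimates then come from Section~\ref{sec:pre}. By Theorem~\ref{thm:compact boundary}, the barrier surface we produce at level $a$ has radius at most $\tfrac{4\pi}{3}+2$, so any interior point of it is within that distance of its boundary; and since $T$ is assumed \emph{stable}, Lemma~\ref{lem:stable fbms}\eqref{item:2-sided} tells us each component of $T$ is a disk of intrinsic diameter at most $\pi+\tfrac83$. Threading these bounds through the Gromov--Lawson iteration — at each step one descends from a level set to the next, crossing a minimizing surface whose radius is bounded and whose boundary portion on $T$ has controlled diameter — yields a path from $\gamma(0)$ down toward $\min\mathfrak d$ and back up to $\gamma(1)$ whose total length is a fixed universal combination of $\tfrac{4\pi}{3}+2$ and $\pi+\tfrac83$. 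Bookkeeping the number of ``switchback'' segments (the argument uses a bounded number of them, as in \cite{LM20}) gives a bound of the shape $c_1(\tfrac{4\pi}{3}+2)+c_2(\pi+\tfrac83)$, and one checks this is at most $8\pi+12$. Since the whole scheme is the word-for-word analogue of the proof of Proposition~\ref{prop:non-empty T_0} with the index-one surface replaced by a stable one, I would present it by pointing to that proof and indicating only the two places where the constants change: the use of Lemma~\ref{lem:stable fbms}\eqref{item:2-sided} instead of \eqref{item:1-sided}, and the fact that there is no ``core region'' min-max step to account for when $T$ is already stable.

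The main obstacle I anticipate is not any single hard estimate but rather the careful homological and topological setup that makes the minimizing step legitimate: one must check that the relevant relative homology class is nontrivial (so the minimizer is not empty and genuinely separates $\gamma(0)$ from $\gamma(1)$ within $\{\mathfrak d\le a\}$), that the minimizer is a \emph{free boundary} surface meeting $T$ orthogonally with the expected regularity at the corner where $T$ meets $\partial_rN$ (here one invokes \cite{Mor03}*{Corollary 3.8} and the transversality in the definition of relative boundary and portion), and that the mean-convexity of $\partial_rN$ plus $|H_\Sigma|\le 1$ of the cut surfaces really do serve as barriers preventing the minimizer from escaping through $\partial_rN$ — this is where $H_{\partial_rN}\ge 1$ is essential. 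Once these are in place the length bookkeeping is routine, so I expect the proof to be short modulo a precise statement of the Gromov--Lawson lemma in Appendix~\ref{sec:GL trick}, which is exactly how the paper organizes it.
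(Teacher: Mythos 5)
Your high-level reading is right: the paper really does prove this by repeating the Gromov--Lawson argument of the Appendix word-for-word with $T_0$ replaced by the fixed point $p$, and indeed defers to Proposition~\ref{prop:non-empty T_0} rather than writing it out. However, your account of \emph{where} the constant $8\pi+12$ comes from is wrong, and if you actually tried to bookkeep the way you describe you would not land on this number.

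Lemma~\ref{lem:stable fbms}\eqref{item:2-sided} is never used in the proof of this proposition; the intrinsic diameter $\pi+\tfrac{8}{3}$ of the stable disk does not enter the bound. In the proof of Proposition~\ref{prop:non-empty T_0} there are two cases. Case~1 ($\mk d(x)\le \tfrac{8\pi}{3}+4$) is a triangle inequality that has to pass \emph{through} $T_0$, and this is where the $\operatorname{diam}(T_0)\le\tfrac{59\pi}{3}+28$ term from Lemma~\ref{lem:stable fbms}\eqref{item:1-sided} appears, giving $25\pi+36$. Case~2 is the sweepout with the spanning surface $\Gamma$ relative to $T_0$, and its bound $8\pi+12$ is built \emph{only} out of the radius estimate $\tfrac{4\pi}{3}+2$ of Theorem~\ref{thm:compact boundary}: it is $2(\tfrac{8\pi}{3}+4)+2(\tfrac{4\pi}{3}+2)=6(\tfrac{4\pi}{3}+2)$. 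In Proposition~\ref{prop:distance bounds}, $T$ is stable so $T_0=\emptyset$, $\mk d$ is the distance to a \emph{point} $p$, and the closed curve is formed from $\gamma_x\cdot\gamma\cdot\gamma_y^{-1}$ alone --- there is no connector $\sigma\subset T_0$ to account for. Consequently Case~1 collapses to $\dist_N(x,y)\le 2\mk d(x)\le 2(\tfrac{8\pi}{3}+4)$, which is smaller than $8\pi+12$, and Case~2 is literally unchanged. So the improvement from $25\pi+36$ to $8\pi+12$ is entirely due to dropping the $\operatorname{diam}(T_0)$ term in Case~1, not to swapping the index-one diameter bound for the stable one. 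Writing the answer as $c_1(\tfrac{4\pi}{3}+2)+c_2(\pi+\tfrac{8}{3})$ with $c_2>0$ is therefore a genuine miscount.

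Two smaller points. There is no iteration of ``switchbacks'' in the paper's version of Gromov--Lawson: a single spanning surface $\Gamma$ is produced from the geometrically prime hypothesis and one intersects it with a single level set $\mk d^{-1}(t')$. And your worry about nontriviality of the relative homology class is a non-issue here: the second item in Definition~\ref{def:geo prime} is exactly the statement that \emph{every} closed curve bounds relative to $T$, so the spanning surface for $\wti\gamma$ exists by hypothesis, not as something to be checked.
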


	\section{Decomposing manifolds into geometrically prime regions}
	\label{sec:decomp}

	\subsection{Free boundary minimal surfaces with index one}
	In this subsection, we will consider $(N,\partial_rN,T,g)$ a Riemannian manifold with relative boundary and portion that satisfies the following assumptions:
	\begin{enumerate}[label=(\Alph*)]
		\item\label{item:stable T} Each connected component of $T$ is a stable free boundary minimal surface;
		\item\label{item:separate} Each compact, two-sided, properly embedded surface in $(N\setminus T,\partial_rN,g)$ separates $N$;
		\item\label{item:2-sided isotopy} For each two-sided, embedded, stable free boundary minimal surface $\Gamma$, $N\setminus \Gamma$ has a connected component whose metric completion is diffeomorphic to $\Gamma\times [0,1]$;
		\item\label{item:unstable cover} Each one-sided, embedded free boundary minimal surface has an unstable double cover;
		\item\label{item:frankel} Any two one-sided, embedded free boundary minimal surfaces intersect each other.
	\end{enumerate}
	
	\bigskip
	Our aim is to construct a two-sided, index one, free boundary minimal surface that separates the manifolds into two geometrically prime regions.
	
	\bigskip
	
	Let $(N,\partial_rN,T,g)$ be a Riemannian manifold with relative boundary and portion. Denote by $\mc U_\Lambda(N)$ the collection of one-sided, stable free boundary minimal surfaces whose double covers are stable and have area less than or equal to $\Lambda$. 
	
	Now we introduce a {\em $(\mc U,\Lambda)$-process} to remove the elements in $\mc U_\Lambda(N)$:
	Let $p\in N$ be a fixed point. Then we take a sequence of disjoint surfaces $\{\Sigma_j\}\subset \mc U_\Lambda(N)$ satisfying
	\[ \dist_N(\Sigma_j,p)= \inf\{\dist_N(\Sigma',p);\Sigma'\in\mc U_\Lambda(N) \text{ does not intersect $\Sigma_i$ for all $i\leq j-1$}\} .\]
	The existence of $\Sigma_j$ is guaranteed by the compactness of $\mc U_\Lambda(N)$. It suffices to prove that $\dist_N(p,\Sigma_j)\to\infty$ provided that $\{\Sigma_j\}$ has infinitely many elements. Suppose not, $\Sigma_j$ smoothly converges to some $S\in \mc U_\Lambda(N)$ by the compactness of stable free boundary minimal surfaces with bounded area; c.f. \cite{GLZ16}. Since $\Sigma_j$ does not intersect $\Sigma_i$ for $i\neq j$, then $\Sigma_j$ does not intersect $S$. Then in the metric completion of $N\setminus S$, $\Sigma_j$ smoothly converges to the double cover of $S$, which contradicts that $S_j$ are one-sided.

	Denote by $N'$ be the metric completion of $N\setminus \cup_j\Sigma_j$. By convention, $\partial_r N'=\partial_rN\cap N'$ and $T'=\partial  N'\setminus \partial_rN'$. Note that each connected component of $T'\setminus T$ is a double cover of some $\Sigma_j$. Furthermore, $( N',\partial_r N',T',g)$ does not contain any elements in $\mc U_\Lambda(N)$. From the construction, it is also clear that $\{\Sigma_j\}$ contains only finitely many elements provided that $N$ is compact.

	\begin{proposition}\label{prop:cut two side to get prime}
		Let $(N,\partial_rN,T,g)$ be a Riemannian manifold with relative boundary and portion satisfying  Assumptions \ref{item:stable T}--\ref{item:frankel} and $\Ric(N)\geq 0$, $H_{\partial_r N}\geq 1$. If $N$ is not geometrically prime, then there exists a compact, two-sided, embedded free boundary minimal surface $S$ such that
		\begin{enumerate}
			\item $S$ has index 1;
			\item each connected component of the metric completion of $N\setminus S$ is geometrically prime. 
		\end{enumerate}
	\end{proposition}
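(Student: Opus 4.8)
Since every connected component of $T$ is a stable free boundary minimal surface by Assumption~\ref{item:stable T}, we have $T_0=\emptyset$, so condition~(1) of Definition~\ref{def:geo prime} is automatic, and $N$ fails to be geometrically prime only because some closed curve $\gamma_0\subset N$ does not bound a surface relative to $T$. The plan is to produce $S$ by one-parameter free boundary min-max and then to verify, using Assumptions~\ref{item:stable T}--\ref{item:frankel}, that cutting along $S$ yields geometrically prime pieces. First I would set up the min-max: sweep $N$ out by surfaces with free boundary on $\partial_rN$ and with $T$ (which is stable) as a barrier, and let $W(N)$ be the associated width; non-primeness forces $W(N)>0$, since $\gamma_0$ cannot be swept away relative to $T$. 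The free boundary min-max construction together with the Marques-Neves type index bound realizes $W(N)$ by a compact, two-sided, embedded free boundary minimal surface of multiplicity one and Morse index at most $1$ (possibly with stable pieces attached). If the essential component had index $0$ it would be stable, and Assumption~\ref{item:2-sided isotopy} would then produce a product region $\cong S\times[0,1]$ on one side of it, across which the sweepout could be deformed so that $\gamma_0$ bounds --- contradicting $W(N)>0$; one-sided limit components and components of $T$ are excluded in the same way via Assumptions~\ref{item:unstable cover} and~\ref{item:frankel}. Hence $S$ has index exactly $1$, and Lemma~\ref{lem:stable fbms} shows $S$ is compact. This already settles the case where $N$ itself is compact.

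The hard part is to run this min-max when the relevant region --- the ``core'' of $N$, obtained by discarding the product ends supplied by Assumption~\ref{item:2-sided isotopy} --- is non-compact, where no general min-max theory applies. Following Song~\cite{Song19}*{\S 3.2}, I would fix $p\in N$, choose an exhaustion $N_1\subset N_2\subset\cdots$ of $N$ by compact subdomains with relative boundary and portion, and perturb $g$ in a thin collar of the part of $\partial N_i$ lying in the interior of $N$ so that this new boundary becomes a stable free boundary minimal portion. The perturbation produces only finitely many new stable free boundary minimal surfaces, which I would delete by a $(\mc U,\Lambda)$-type cutting performed in order of increasing distance to $p$; the outcome is a sequence of compact ``core regions'' $\hat N_i$ that Gromov-Hausdorff converge to $N$ and inherit a weak Frankel property directly from the cutting. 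Applying the compact min-max step to each $\hat N_i$ gives a two-sided, index one, free boundary minimal surface $S_i$ for the perturbed metric. The weak Frankel property together with Assumption~\ref{item:frankel} forces each $S_i$ to meet a fixed compact subset of $N$; since the perturbations are supported in collars escaping to infinity, the $S_i$ have uniformly bounded area and index, and --- by Theorem~\ref{thm:compact boundary} and Lemma~\ref{lem:stable fbms} --- uniformly bounded diameter near that fixed compact set, so after passing to a subsequence they converge smoothly and with multiplicity one to a non-empty limit $S$. Because the perturbations have left a neighborhood of $S$ untouched, $S$ is a free boundary minimal surface for the original metric $g$ of index at most $1$; ruling out index $0$ as before, $S$ has index $1$ and, by Lemma~\ref{lem:stable fbms}, is compact.

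Finally I would check that each connected component $N'$ of the metric completion of $N\setminus S$ is geometrically prime. By Assumption~\ref{item:separate}, $S$ separates $N$, so each $N'$ receives a single copy of $S$ inside its new portion $T'=\partial N'\setminus\partial_rN'$; since $S$ is connected of index one while the remaining components of $T'$ are stable, the unstable part $T_0'$ is either empty or a single connected index one surface, which is condition~(1) of Definition~\ref{def:geo prime}. For condition~(2) I would use that $S$ realizes $W(N)$: a closed curve in $N'$ that did not bound relative to $T'$ would, after being capped off along $S$, give a sweepout of $N$ detecting nontrivial topology but of strictly smaller width than $W(N)$ --- Assumption~\ref{item:separate} being what keeps the homological bookkeeping across $S$ unambiguous --- contradicting the min-max characterization of $S$. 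The \emph{main obstacle} throughout is the non-compact core construction of the second paragraph: one must control the perturbed-metric min-max surfaces $S_i$ in area, index and position well enough to extract a limit that is simultaneously non-empty and minimal for $g$. Here the radius bound of Theorem~\ref{thm:compact boundary}, the diameter bound of Lemma~\ref{lem:stable fbms}, the compactness of stable free boundary minimal surfaces of bounded area, and the weak Frankel property coming from the cutting are precisely the ingredients that make this passage to the limit work.
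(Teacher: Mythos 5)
Your broad outline---compact min-max to produce an index-one free boundary minimal surface, plus Song-style exhaustion with perturbed collars and a $(\mc U,\Lambda)$-process to handle the non-compact core---does match the skeleton of the paper's argument. But there is a genuine gap in the mechanism you use to rule out stable min-max limits, and this is where the paper does the bulk of its work.

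You propose to exclude index $0$ by arguing: if the min-max surface were stable, Assumption~\ref{item:2-sided isotopy} produces a product region $S\times[0,1]$ on one side, ``across which the sweepout could be deformed so that $\gamma_0$ bounds, contradicting $W(N)>0$.'' This does not work. A stable free boundary minimal surface can perfectly well realize a \emph{positive} width; the product collar on one side tells you nothing about the topology on the other side, and there is no reason such a sweepout could be pushed to zero. The paper's actual mechanism is different and essential: before running min-max, one first produces (by a minimizing process dual to the ``bad'' curve $\hat\gamma$) a surface $\Sigma$ of definite area, and then removes from the compact core \emph{all} two-sided stable free boundary minimal surfaces, and all one-sided ones with stable double cover, of area at most $2\,\mathrm{Area}(\Sigma,g)$---this is Step~I (cutting off the outermost stable surfaces homologous to components of $T$, via Claim~\ref{claim:compact B1}) together with the $(\mc U,\Lambda)$-process and the Step~III cutting, and one must prove these processes terminate (Claims~\ref{claim:G to infty}, \ref{claim:existence of Sigma1}). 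The min-max theory then gives a surface $E_k$ with $m\,\mathrm{Area}(E_k)\le 2\,\mathrm{Area}(\Sigma,g)$ and index $\le 1$, two-sided by the Catenoid Estimates; since the core now contains no stable two-sided surface of such small area, $E_k$ is forced to have index exactly $1$. Without this pruning and the explicit area upper bound from $\Sigma$, the ``index exactly one'' conclusion simply does not follow from your sketch.

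Two further missing ingredients. First, the width upper bound $W(\wti N_k)\le 2\,\mathrm{Area}(\Sigma,g)$ is not free: the paper establishes it (Step~IV) by producing a one-sided minimizer, a foliated neighborhood of it, and running a free boundary level set flow to show every component of $\wti T_k$ has a contracting neighborhood; your proposal simply asserts the width is positive and realized. Second, your argument for condition~(2) of geometric primeness for the pieces $W_1,W_2$---that a non-bounding curve in $W_1$, ``after being capped off along $S$, gives a sweepout of strictly smaller width''---is not a real argument: the paper instead takes a minimizing representative of such a curve, shows it must be a one-sided stable surface $V$ with unstable double cover (Assumption~\ref{item:unstable cover}), and derives a two-sided stable surface in $\wti W_1$ contradicting the construction of $\hat N$ (Claim~\ref{claim:prime}). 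Your sweepout comparison has no supporting lemma and it is unclear what it even means to compare widths of $N$ and of $W_1$.

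In summary: your plan reproduces the high-level strategy, but the indispensable preprocessing (Steps~I and III), the role of the minimizing surface $\Sigma$ and the resulting $2\,\mathrm{Area}(\Sigma,g)$-Frankel property, the level set flow argument giving the width bound, and the one-sided-surface argument for primeness of the pieces are all absent, and the replacements you offer for the index-exactly-one and primeness steps do not hold up.
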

	
	\begin{proof}
		The proof is divided into five steps.
		
		\medskip
		{\noindent\bf Step I:} {\em Construct $(\hat N,\partial _r\hat N,\hat T,g)$ which does not contain any two-sided, stable free boundary minimal surfaces and satisfies Assumptions \ref{item:stable T}--\ref{item:frankel}.}
		
		\medskip
		Denote by $\{\Gamma_j\}$ the union of the connected components of $T$. For $\Gamma_1$, we define $\mk B_1$ as the collection of two-sided, stable free boundary minimal surfaces that are homologous to $\Gamma_1$. We suppose that $\mc B_1$ is non-empty; otherwise, we skip this step for $\Gamma_1$ and then consider $\Gamma_2$. By Assumption \ref{item:2-sided isotopy}, for each $\Gamma'\in\mk B_1$, the connected component of $N\setminus \Gamma'$ containing $\Gamma_1$ is diffeomorphic to $\Gamma'\times[0,1)$. 
		\begin{claim}\label{claim:compact B1}
			There exists $\hat \Gamma_1\in \mk B_1$ so that 
			\[  \dist_N(\Gamma_1,\hat \Gamma_1)=\sup_{\Gamma'\in\mk B_1}\dist_N(\Gamma_1,\Gamma').\]
		\end{claim}
		\begin{proof}[Proof of Claim \ref{claim:compact B1}]
			Since $N$ is not geometrically prime, then $N$ contains a closed curve that does not bound a surface relative to $T$, which implies $\sup_{\Gamma'\in\mk B_1}\dist_N(\Gamma_1,\Gamma')<\infty$ . Let $\{\Gamma_j'\}\subset \mk B_1$ be a subsequence such that $\dist_N(\Gamma_1, \Gamma'_j)$ converges to $\sup_{\Gamma'\in\mk B_1}\dist_N(\Gamma_1,\Gamma')$. Together with Lemma \ref{lem:stable fbms}, all of $\{\Gamma'_j\}$ are contained in a compact domain $\Omega$. Recall that by \cite{CF20}*{Proposition 1.8}, the length of $\partial \Gamma'_j$ are uniformly bounded. Then by \cite{FrLi}*{Lemma 2.2}, the area of $\Gamma'_j$ is bounded by a constant depending only on $\Omega$. Thus by the compactness theorem \cite{GLZ16}, a subsequence of $\{\Gamma'_j\}$ smoothly converges to a stable free boundary minimal surface $\hat \Gamma_1$ which is either two-sided, or one-sided with a stable double cover. By Assumption \ref{item:unstable cover}, $\hat\Gamma_1$ is two-sided. This completes the proof of Claim \ref{claim:compact B1}.
		\end{proof}
		
		Denote by $N_1$ the connected component of the metric completion of $N\setminus \hat \Gamma_1$ that does not contain $\Gamma_1$. Clearly, $N_1$ is diffeomorphic to $N$ and there is no two-sided, stable free boundary minimal surface in $(N_1,\partial_rN\cap N_1,\hat \Gamma_1\cup T\setminus \Gamma_1,g)$ that is isotopic to $\hat \Gamma_1$. By the same argument for each $\Gamma_j$, we obtain a region $\hat N\subset N$ such that $(\hat N,\partial_r\hat N,\hat T )$ satisfies  Assumptions \ref{item:stable T}--\ref{item:frankel} and $\hat N\setminus \hat T$ does not contain any two-sided, stable free boundary minimal surfaces.
		
		\medskip
		{\noindent\bf Step II:} {\em We approximate $\hat N$ by compact domains that have no one-sided, stable free boundary minimal surfaces with small area.}
		
		\medskip
		Let $\gamma\subset N$ be a closed curve that does not bound a surface relative to $T$. By the construction of $\hat N$, there exists $\hat \gamma\subset \hat N$ which is isotopic to $\gamma$ in $N$. It follows that $\hat \gamma$ does not bound a surface in $\hat N$ relative to $\hat T$.
		
		Then we assume that $\{B_k\}$ is an exhausting sequence of compact domains such that $\partial B_k\setminus \partial \hat N$ is smooth and transverse to $\partial \hat N$. Without loss of generality, we assume that $\partial B_k\setminus \partial \hat N$ does not intersect $\hat T$. We choose a metric $g_k$ on $B_k$ such that
		\begin{itemize}
			\item $\partial B_k\setminus \partial \hat N$ is a stable free boundary minimal surface with respect to $g_k$;
			\item $g_k=g$ except in a $1/k$ neighborhood of $\partial B_k\setminus \partial \hat N$ that does not intersect $\hat T$.
		\end{itemize}
		By our convention, let $\partial_rB_k=\partial_r \hat N\cap B_k$ and $T_{B_k}=\partial B_k\setminus \partial_rB_k$. Hence $(B_k,\partial_r  B_k, T_{B_k},g_k)$ is a compact Riemannian manifold with relative boundary and portion. 
		\begin{claim}\label{claim:gamma nontrivial in Bk}
			$\hat \gamma$ does not bound a surface in $B_k$ relative to $T_{B_k}$.
		\end{claim}
		\begin{proof}[Proof of Claim \ref{claim:gamma nontrivial in Bk}]
			Suppose not, then there exists a surface $\Gamma'$ with $\partial \Gamma'\setminus T_{B_k}=\hat \gamma$. Recall that each connected component of $\hat T$ is a disk. Thus we can take $\Gamma'$ satisfying $\partial \Gamma'\setminus (T_{B_k}\setminus \hat T)=\gamma$. Then there exists a minimizing surface $F\subset B_k$ among all of these $\Gamma'$ described as above. Recall that $\dist_N(\hat \gamma,T_{B_k}\setminus \hat T)$ is sufficiently large. By Theorem \ref{thm:compact boundary}, $F$ is a compact minimal surface and does not intersect $\{g\neq g_k\}$. This contradicts that $\hat \gamma$ does not bound a surface in $\hat N$ relative to $\hat T$.
		\end{proof}
		
		By Claim \ref{claim:gamma nontrivial in Bk}, for some fixed large $k_0$, there is a minimizing surface $\Sigma$ in $(B_{k_0},\partial_rB_{k_0},g_k)$ intersecting $\hat \gamma$ with algebraic intersection number 1. Note that $\Sigma$ is also an embedded surface in $(\hat N,\partial_r N,T,g)$.

		Now we always take $k\geq k_0$. Applying $(\mc U,2\mr{Area}(\Sigma,g))$-process to $(B_k,\partial_rB_k,T_{B_k},g_k)$, we obtain finitely many disjoint surfaces $\{G_k^j\}_j$ such that $(\hat B_k,\partial_r\hat B_k, T_{\hat B_k},g_k)$ does not contain any one-sided stable free boundary minimal surfaces whose double covers are stable and have area less than or equal to $2\mr{Area}(\Sigma,g)$. Here $\hat B_k$ is the metric completion of $B_k\setminus \cup_jG_k^j$ and 
		$ \partial_r\hat B_k=\hat B_k\cap \partial_r B_k, \  T_{\hat B_k}=\partial \hat B_k\setminus \partial_r\hat B_k$. By  next claim, we conclude that $\hat B_k$ converges to $\hat N$ in the sense of Gromov-Hausdorff. 
		\begin{claim}\label{claim:G to infty}
			$\inf_{j}\dist_{g_k}(G_k^j,\hat \gamma)\to\infty$ as $k\to\infty$.
		\end{claim}
		\begin{proof}[Proof of Claim \ref{claim:G to infty}]
			Assume that $G_k^1$ achieves $\inf_{j}\dist_{g_k}(G_k^j,\hat \gamma)$. Suppose on the contrary that $\dist_{g_k}(G_k^1,\hat \gamma)$ remains uniformly bounded. Then by the compactness for stable free boundary minimal surfaces \cite{GLZ16}, $G_k^1$ smoothly converges to a one-sided stable free boundary minimal surface $G\subset \hat N$ whose double cover is stable. This contradicts the construction of $\hat N$. Therefore, we conclude that $\dist_{g_k}(G^1_k,\hat \gamma)\to\infty$ as $k\to\infty$.
		\end{proof}
		
		\medskip
		{\noindent\bf Step III: } {\em Cut along two-sided stable free boundary minimal surfaces with small area in compact domains with perturbed metrics.}
		
		\medskip
		Let $\mk F_k$ be the collection of two-sided, stable free boundary minimal surfaces 
		\[(\Gamma',\partial \Gamma')\subset ( \hat B_k,\partial_r\hat B_k,g_k) \]
		with $\Gamma'\subset\hat B_k\setminus T_{\hat B_k}$ and $\mr{Area}(\Gamma', g_k)\leq 2\mr{Area}(\Sigma,g)$. Now we assume that $\mk F_k$ is non-empty; otherwise, we skip this step for $\hat B_k$. 
		\begin{claim}\label{claim:existence of Sigma1}
			There exists $\Sigma_k^1\in\mk F_k$ so that
			\[ \dist_{N}(\Sigma_k^1,\hat\gamma)=\inf_{\Sigma'\in \mk F_k}\dist_{N}(\Sigma',\hat\gamma).\]
		\end{claim}
		\begin{proof}[Proof of Claim \ref{claim:existence of Sigma1}]
			Suppose that $\{\Sigma'_j\}\subset \mk F_k$ and $\dist(\Sigma'_j, \hat{\gamma})$ converges to $\inf_{\Sigma'\in \mk F_k}\dist_{N}(\Sigma',\hat\gamma)$. Then by the compactness theorem \cite{GLZ16}, $\Sigma_j'$ smoothly converges to a stable free boundary minimal surface $\Sigma_k^1$ in $(\hat B_k,\partial_r\hat B_k)$. Moreover, $\Sigma_k^1$  either is two-sided or have a stable double cover. Note that $(\hat B_k,\partial_r\hat B_k)$ does not contain a stable free boundary minimal surface whose double cover is stable and has area less than or equal to $2\mr{Area}(\Sigma,g)$. Thus we conclude that $\Sigma_k^1$ is two-sided. This finishes the proof of Claim \ref{claim:existence of Sigma1}.
		\end{proof}
		
		By a similar argument as in Claim \ref{claim:G to infty}, we also have
		$\dist_{N}(\Sigma_k^1,\hat \gamma)\to\infty$ as $k\to\infty$.
		Then for any large fixed $k$, let $\hat N_k^1$ denote the connected component of the metric completion of $\hat B_k\setminus \Sigma^1_k$ that contains $\hat\gamma$. If $\hat N_k^1\setminus \Sigma_k^1$ contains elements in $\mk F_k$, then we take $\Sigma_k^2\in \mk F_k$ that is contained in $\hat N_k^2\setminus \Sigma_k^1$ so that 
		\[ \dist_{N}(\Sigma_k^2,\hat\gamma)=\inf\big\{\dist_{N}(\Sigma',\hat\gamma);\Sigma'\in \mk F_k \text{ is contained in }  \hat N_k^2 \setminus \Sigma_k^1\big\}.\]
		By continuing this argument, we obtain two sequences $\{\Sigma_k^j\}_j$ and $\{\hat N_k^j\}_j$. 
		
		Now we are going to prove that this sequence $\{\Sigma_k^j\}_j$ consists of finitely many surfaces. Suppose not, By the compactness theorem \cite{GLZ16}, we have $\Sigma_k^j$ smoothly converges to a stable free boundary minimal surface $C_k$, which  either is two-sided or has a stable double cover. Recall that $\hat B_k\setminus T_{\hat B_k}$ does not contain an embedded one-sided free boundary minimal surface with stable double cover. Hence $C_k$ is two-sided. Then the stability of $C_k$ gives that $\Sigma_k^j$ lies in one side of $C_k$ for all large $j$. From the construction of $\hat N_k^j$, $\hat \gamma$ and $C_k$ lie in the same side of $\Sigma_k^j$. It follows that
		\[\dist_N(\Sigma_k^j,\hat \gamma)<\dist_N(C_k,\hat\gamma)\]
		for all sufficiently large $k$. This contradicts the choice of $\Sigma_k^j$. Thus $\{\Sigma_k^j\}_j$ is finite.

		For simplicity, let $\Sigma_k:=\cup_j\Sigma_k^j$ and $\wti N_k$ be the connected component of the metric completion of $\hat B_k\setminus \Sigma_k$ that contains $\hat \gamma$. Note that 
		\[  \partial_r\wti N_k=\partial_r\hat N\cap \wti N_k \ \ \text{ and } \ \ \wti T_k=\partial\wti N_k\setminus \partial_r\wti N_k.\]
		By Claim \ref{claim:G to infty} and the fact that $\dist_N(\Sigma_k,\hat\gamma)\to \infty$, we have that $(\wti N_k,g_k)$ converges to $(\hat N,g)$ in the sense of Gromov-Hausdorff convergence. Hence $\wti N_k$ contains $\Sigma$ for all large $k$. Moreover, by the construction of $\Sigma_k$, $(\wti  N_k\setminus \wti T_k,\partial_r\wti N_k,g_k)$ does not contain any stable free boundary minimal surfaces that are
		\begin{itemize}
			\item two-sided with area less than or equal to $2\mr{Area}(\Sigma,g)$;
			\item one-sided and has stable double cover with area less than or equal to $2\mr{Area}(\Sigma,g)$.
		\end{itemize} 
		Furthermore, $\wti N_k$ satisfies a {\em $2\mr{Area}(\Sigma,g)$-Frankel property}: any two free boundary minimal surfaces intersect if they are two-sided (or one-sided) and have area less than or equal to $\mr{Area}(\Sigma,g)$ (resp. $\mr{Area}(\Sigma,g)$).

		\medskip
		{\noindent\bf Step IV: } {\em The first width of $(\wti N_k,g_k)$ is bounded by $2\mr{Area}(\Sigma,g)$.}
		
		\medskip
		Recall that $\wti N_k$ contains $\Sigma$ which intersects $\hat \gamma$ with algebraic intersection number 1. By a minimizing process, there exists a free boundary minimal surface $E_k\subset (\wti N_k\setminus \wti T_k,\partial_r\wti N_k,g_k)$ with 
		\[\mr{Area}(E_k,g_k)\leq \mr{Area}(\Sigma,g_k)=\mr{Area}(\Sigma,g).\]
		By Steps II and III, $E_k$ is one-sided and has unstable double cover. Then by a similar argument as in \cite{Wang20}*{Lemma 2.5}, there exists a neighborhood $\mc N_k\subset \wti N_k$ of $E_k$ that is foliated by free boundary surfaces with mean curvature vector pointing away from $E_k$. Let $(\Omega_k^t)_t$ denote the free boundary level set flow starting from $\wti N_k\setminus \mc N_k$. Then by \cite{EHIZ}*{Theorem 1.5}, each connected boundary of $\partial \Omega_k^\infty$ is 
		\begin{itemize}
			\item either a two-sided stable free boundary minimal surface with area less than or equal to $2\mr{Area}(\Sigma,g)$;
			\item or a one-sided stable free boundary minimal surface with area less than or equal to $\mr{Area}(\Sigma,g)$ and its double cover is stable.
		\end{itemize}
		By the construction of $\wti N_k$, there are no such surfaces in $\wti N_k\setminus \wti T_k$. Thus we conclude that $\partial\Omega_k^\infty=\wti T_k$. As a corollary, each connected component of $\wti T_k$ has {\em a contracting neighborhood}, i.e. for each connected component $\Gamma'$ of $T_k$, there exists a neighborhood of $\Gamma'$ in $\wti N_k$ that is foliated by free boundary surfaces with mean curvature vector pointing towards $\Gamma'$. Moreover, the above argument gives that the first width for $\wti N_k$ is bounded from above by $2\mr{Area}(\Sigma,g)$.

		\medskip
		{\noindent\bf Step V: } {\em Apply min-max theory to $N_k$ to find the desired surfaces.}
		
		\medskip
		By min-max theory \cite{LZ16} (see \cite{Wang20}*{Theorem 3.10} for manifolds with relative boundaries and portions), there exist an integer $m$ and an embedded free boundary minimal surface $(E_k,\partial E_k)$ in $(\wti N_k\setminus \wti T_k,\partial_r\wti N_k,g_k)$ such that 
		\[ m\mr{Area}(E_k;g_k)\leq 2\mr{Area}(\Sigma;g), \ \ \text{ and } \ \ \mr{Index}(E_k)\leq 1.\]
		Moreover, by Catenoid Estimates in \cite{KMN16} (see \cite{Wang} for a free boundary version), $E_k$ is two-sided. Recall that $(\hat N_k\setminus\hat T_k,\partial_r\hat N_k,g_k)$ does not contain any two-sided stable free boundary minimal surfaces with area less than or equal to $2\mr{Area}(\Sigma)$. Thus $E_k$ has index one. Then by the $2\mr{Area}(\Sigma,g)$-Frankel property, $E_k$ intersects $\Sigma\cup\hat \gamma$ for all large $k$; otherwise, one can construct a one-sided free boundary minimal surface that does not intersect $E_k$ and has an unstable double cover. Letting $k\to\infty$, one can obtain a free boundary minimal surface in $\hat N$ which is 
		\begin{itemize}
			\item either two-sided and has index one;
			\item or one-sided and has a double cover with index less than or equal to one.
		\end{itemize}
		In both cases, the limit of $E_k$ is compact by Lemma \ref{lem:stable fbms}. Thus $E_k$ is a free boundary minimal surface in $(\hat N,\partial_r\hat N,\hat T,g)$ for all sufficiently large $k$.
		
		Now we pick $S=E_k$ and then prove that $S$ is the desired surface. Let $W_1$ and $W_2$ be two connected components of the metric completion of $N\setminus S$. Let $\hat W_1$ and $\hat W_2$ be two connected components of the metric completion of $\hat N\setminus S$ respectively.
		
		\begin{claim}\label{claim:prime}
			$W_1$ and $W_2$ are both geometrically prime.
		\end{claim}
		\begin{proof}[Proof of Claim \ref{claim:prime}]
			Suppose on the contrary that $W_1$ is not geometrically prime. Then there exists a one-sided, compact, connected stable free boundary minimal surface $V\subset W_1$. Recall that $\hat N$ is obtained from by cutting countably many domains which are diffeomorphic to $\Gamma'\times[0,1]$ for some disk $\Gamma'$. Thus we can take $V\subset \hat W_1$.
			
			{By Assumption \ref{item:unstable cover}}, the double cover of $V$ is unstable. Let $\wti W_1$ be the metric completion of $\hat W_1\setminus V$. Then $\wti W_1$ contains two disjoint unstable free boundary minimal surfaces: one is $S$ and the other one is the double cover of $V$. Thus by a minimizing process, $\wti W_1\setminus T$ contains a two-sided stable free boundary minimal surface $S'$, which contradicts the construction of $\hat N$. This completes the proof of Claim \ref{claim:prime}.
		\end{proof}
		Therefore, $S$ is the desired free boundary minimal surface and Proposition \ref{prop:cut two side to get prime} is proved.
	\end{proof}

	\subsection{Geometrically prime decomposition}
	In this subsection, we will decompose a (possibly non-compact) Riemannian manifold with boundary into geometrically prime regions.

	Let $\mc O_S$ (resp. $\mc U_S$) be the collection of two-sided (resp. one-sided) stable free boundary minimal surfaces. Let $\mc U_S^1$ (resp. $\mc U_S^2$) be the collection of $\Sigma\in\mc U_S$ whose double cover is stable (resp. unstable). 
	
	Observe that a sequence of surfaces in $\mc O_S$ or $\mc U_S^1$ converges subsequently if they are bounded. Indeed, Lemma \ref{lem:stable fbms} gives an upper bound of the length of their boundaries. Together with \cite{FrLi}*{Lemma 2.2}, their areas are uniformly bounded. Then the convergence of subsequences would follow from the compactness theorem in \cite{GLZ16}.

	\begin{lemma}\label{lem:cut}
		Let $(M,\partial M,g)$ be a three dimensional Riemannian manifold with non-empty boundary. Suppose that $M$ satisfies that $\Ric(g)\geq 0$ and $H_{\partial M}\geq 1$. Then there exist countably many disjoint free boundary minimal surfaces $\{P_j\}$, $\{D_j\}$ and $\{S_j\}$ such that
		\begin{enumerate}
			\item $\{D_j\}\subset \mc O_S$, $\{P_j\}\subset \mc U_S^1$  and $\{S_j\}$ are two-sided free boundary minimal surfaces of index $1$;
			\item Each connected component of the metric completion of  $M\setminus \mc D$ is geometrically prime. Here 
			\[\mc D=\bigcup_{i,j,k}(P_i\cup D_j\cup S_k).\]
		\end{enumerate}
	\end{lemma}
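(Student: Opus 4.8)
The plan is to cut $M$ along countably many stable free boundary minimal surfaces until every remaining piece satisfies Assumptions \ref{item:stable T}--\ref{item:frankel}, and then to run Proposition \ref{prop:cut two side to get prime} on the pieces that are still not geometrically prime; fix a basepoint $p\in M$, and note that we may assume $M$ connected (closed components are already geometrically prime, and any component carrying a portion produced below has non-empty relative boundary since all those portions are disks). First I would remove the surfaces in $\mc U_S^1$. If $P\in\mc U_S^1$, its double cover is a two-sided stable free boundary minimal surface, hence a disk by Lemma \ref{lem:stable fbms}\eqref{item:2-sided} whose boundary length is bounded by \cite{CF20}*{Proposition 1.8}; so on any fixed compact region its area --- and therefore $\mr{Area}(P)$ --- is controlled by \cite{FrLi}*{Lemma 2.2}, and bounded-distance elements of $\mc U_S^1$ have uniformly bounded area. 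Running the distance-ordered selection of the $(\mc U,\Lambda)$-process with $\mc U_S^1$ in place of $\mc U_\Lambda$, the compactness theorem \cite{GLZ16} supplies the minimizers and forces $\dist_M(P_j,p)\to\infty$: otherwise a subsequence would converge to some $S\in\mc U_S^1$, and being disjoint from $S$ the $P_j$ would converge, in the metric completion of $M\setminus S$, to the two-sided double cover of $S$, contradicting that they are one-sided. Hence the disjoint family $\{P_j\}\subset\mc U_S^1$ is locally finite; let $M_1$ be the metric completion of $M\setminus\bigcup_jP_j$, with relative boundary $\partial M$ and portion the disjoint double covers of the $P_j$. By construction $M_1$ contains no element of $\mc U_S^1$, so every one-sided free boundary minimal surface in $M_1$ has an unstable double cover.

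Next I would remove the non-trivial two-sided stable surfaces. Call a two-sided, embedded, stable free boundary minimal surface $\Gamma$ \emph{trivial} if some component of its complement has metric completion diffeomorphic to $\Gamma\times[0,1]$; by Lemma \ref{lem:stable fbms}\eqref{item:2-sided} and the estimates above the non-trivial ones again have bounded area on compact regions. Choosing a disjoint sequence $\{D_j\}\subset\mc O_S$ of non-trivial two-sided stable free boundary minimal surfaces in $M_1$ with $\dist_M(D_j,p)$ minimal among those disjoint from $D_1,\dots,D_{j-1}$, the compactness theorem \cite{GLZ16} again supplies the minimizers and forces $\dist_M(D_j,p)\to\infty$: if not, a subsequence converges to a stable free boundary minimal surface which is two-sided (a one-sided limit is impossible since $M_1$ has no element of $\mc U_S^1$), whence the $D_j$ nearby are normal graphs over it and the farthest of them cobounds a product region with it, contradicting non-triviality. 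I would take $\{D_j\}$ maximal, so every non-trivial two-sided stable free boundary minimal surface in $M_1$ meets some $D_j$. Let $M_2$ be the metric completion of $M_1\setminus\bigcup_jD_j$, with the usual relative boundary and portion; then $\{P_j\}\cup\{D_j\}$ is disjoint and locally finite, so $M\setminus\bigcup_{i,j}(P_i\cup D_j)$ has only countably many components, each inheriting $\Ric\geq0$ and mean convexity at least $1$ on its relative boundary.

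Then I would verify that every connected component $W$ of the metric completion of $M_2$ satisfies Assumptions \ref{item:stable T}--\ref{item:frankel}. Assumption \ref{item:stable T} is clear since the portion of $W$ consists of disks. Any free boundary minimal surface in $W$ is disjoint from the portion, hence is one in $M_1$ avoiding every $D_j$; so a one-sided one has an unstable double cover, which is \ref{item:unstable cover}, and a two-sided stable one is trivial in $M_1$ by maximality and remains trivial in $W$ (a $D_j$ inside a product region $\Gamma'\times[0,1]$ with $\Gamma'$ a disk would be a separating disk in a ball, hence trivial, impossible; and a $D_j$ at the far end of such a region realizes triviality of the surface through a portion), which is \ref{item:2-sided isotopy}. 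Assumptions \ref{item:separate} and \ref{item:frankel} then follow by minimization: a compact non-separating two-sided surface, or a pair of disjoint one-sided free boundary minimal surfaces, would produce after area minimization a two-sided stable free boundary minimal surface that is non-separating or separates the two one-sided surfaces, hence by \ref{item:2-sided isotopy} a trivial disk whose product complementary region is a ball, which contains neither a non-separating surface nor a one-sided surface --- a contradiction (a one-sided minimizer is ruled out because its double cover would be stable, while $W$ contains no element of $\mc U_S^1$). Finally, for each component $W$: if it is geometrically prime do nothing, and otherwise Proposition \ref{prop:cut two side to get prime} produces a compact, two-sided, index-one free boundary minimal surface $S_W\subset W$ whose complementary pieces in $W$ are geometrically prime. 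Enumerating the $S_W$ over the non-prime components as $\{S_k\}$ --- countably many, pairwise disjoint, disjoint from all $P_j$ and $D_j$ --- and putting $\mc D=\bigcup_{i,j,k}(P_i\cup D_j\cup S_k)$, every connected component of the metric completion of $M\setminus\mc D$ is either a geometrically prime component of $M_2$ or a geometrically prime complementary piece of some $S_W$; this proves the lemma.

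The step I expect to be the main obstacle is the second one when $M$ is non-compact: without a bumpy-metric theorem there is no a priori bound on the number of two-sided stable minimal surfaces below a fixed area, so the distance-ordered selection and the exclusion of accumulation must be run using only the compactness \cite{GLZ16} and the triviality dichotomy above; moreover a ``cylindrical end'' --- where the supremum of distances of two-sided stable surfaces homologous to a fixed portion is infinite --- must be handled separately by cutting along an exhausting sequence of such surfaces escaping to infinity, the regions between consecutive ones being products of disks and therefore automatically geometrically prime.
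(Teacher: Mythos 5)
Your overall architecture matches the paper's: remove $\mc U_S^1$ by the distance-ordered $(\mc U,\Lambda)$-process (Step~I, identical), cut along two-sided stable surfaces to reach Assumptions~\ref{item:stable T}--\ref{item:frankel} (Steps~II--III), then invoke Proposition~\ref{prop:cut two side to get prime} componentwise (Step~IV, identical). The genuine departure --- and the gap --- is that you collapse the paper's two middle steps into a single cut along ``non-trivial'' two-sided stable surfaces, where trivial means ``some component of the complement is a product $\Gamma\times[0,1]$,'' and then try to derive Assumption~\ref{item:separate} as a consequence of \ref{item:2-sided isotopy} plus maximality. That derivation does not work, because a two-sided stable surface can simultaneously be \emph{non-separating} and \emph{trivial} in your sense. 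Concretely, take $W$ a flat solid torus $D^2_r\times S^1$ with $r<1$: the boundary $T^2$ satisfies $H\geq 1$, $\Ric\equiv 0$, and the meridian disk $D^2_r\times\{\mathrm{pt}\}$ is a two-sided, stable, free boundary minimal surface whose complement has metric completion $D^2_r\times[0,1]$, a product ball. Under your definition this disk is trivial, so your Step~2 does not cut it --- yet it fails Assumption~\ref{item:separate}. Your claimed contradiction (``the product complementary region is a ball, which contains neither a non-separating surface nor a one-sided surface'') proves nothing here: the non-separating surface you started with is $\Gamma$ itself, and $\Gamma$ is not contained in $W\setminus\Gamma$; the ball being simple says nothing about $W$. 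The same circularity undermines the deduction of \ref{item:frankel}, since that argument also needs \ref{item:separate} to force the minimizer to separate.

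The paper avoids this by keeping two separate stages: first it cuts along all two-sided stable surfaces that \emph{do not separate} (the sequence $\{C_j\}$), so that afterward every compact two-sided surface in $M_2$ separates; and only then does it cut along two-sided stable surfaces that do not cobound a product (the sequence $\{E_j\}$), which is now unambiguous because every candidate already separates. To repair your argument you would need to restore this first stage, i.e.\ add a cut along non-separating two-sided stable surfaces, with the same distance-ordered selection and escape-to-infinity justification you already run for the non-product case. Your aside about ``cylindrical ends'' is a reasonable worry but is not actually a separate case in the paper's scheme: if trivial two-sided stable surfaces isotopic to a fixed boundary piece march off to infinity, the paper simply cuts along all of the $E_j$ as they go; no special handling is required at this stage (the finiteness concern only appears inside Claim~\ref{claim:compact B1} of Proposition~\ref{prop:cut two side to get prime}, where the non-primeness hypothesis is used to cap the distance).
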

	\begin{proof}
		Let $p\in M$ is a fixed point.
		
		\medskip
		\noindent
		{\bf Step I: } {\em There exists a sequence of disjoint surfaces $\{P_j\}\subset \mc U_S^1$ such that every $\Gamma' \in \mc U_S^1$ intersects $P_j$ for some $j\geq 1$. }
		
		\medskip
		We will choose these surfaces inductively. Suppose we have chosen $\{P_j\}_{j\leq k}$. Then we take $P_{k+1}$ that minimizes $\dist_M(p,\Gamma')$ among all $\Gamma'$ satisfying the following:
		\begin{itemize}
			\item $\Gamma'\in \mc U_S^1$;
			\item $\Gamma'$ does not intersect $P_j$ for all $1\leq j\leq k$.
		\end{itemize}
		To finish Step I, it suffices to prove that $\dist_M(p,P_j)\to \infty$ provided that $\{P_j\}$ is an infinite set. Suppose not, then by the compactness theorem in \cite{GLZ16}, $P_j$ smoothly converges to an element $P\in \mc U_S^1$. Since $P_j$ does not intersect $P_i$ for $i\neq j$, then $P_j$ does not intersect $P$. It follows that $P_j$ smoothly converges to the double cover of $P$. Hence $P_j$ is two-sided, it contradicts the choice of $P_j$. This finishes the proof of Step I.
		
		\medskip
		Let $M_1$ be the metric completion of $M\setminus \cup_jP_j$. Then by Step I, there is no surface $\Gamma'\subset M_1$ that belongs to $\mc U_S^1$.
		
		\medskip
		{\noindent\bf Step II:} {\em There exists a sequence of disjoint surfaces $\{C_j\}\subset \mc O_S$ in $M_1$ such that the metric completion of $M_1\setminus\cup_jC_j$ satisfies \ref{item:separate}.}
		
		\medskip
		We use the inductive method again. Suppose we have chosen $\{C_j\}_{j\leq k}$. Then we take $C_{k+1}$ that minimizes $\dist_M(\Gamma',p)$ among all $\Gamma'$ satisfying the following:
		\begin{itemize}
			\item $\Gamma'\in \mc O_S$;
			\item $\Gamma'\subset M_1\setminus \cup_{j\leq k}C_j$;
			\item $\Gamma'$ does not separate $M_1\setminus \cup_{j\leq k}C_j$.
		\end{itemize}
		We now prove that $\dist_M(p,C_j)\to \infty$ provided that $\{C_j\}$ consists of infinitely many elements. Suppose not, then by \cite{GLZ16}, $C_j$ smoothly converges to a stable free boundary minimal surface $C$ that belongs to $\mc O_S$ or $\mc U_S^1$. By Step I, $(M_1,M_1\cap \partial M)$ does not contain surfaces in $\mc U_S^1$. Thus $C\in \mc O_S$. Then by the smooth convergence, $C_k$ is a positive graph over $C$ for all sufficiently large $k$, which contradicts that $C_{k+1}$ does not separate $M_1\setminus \cup_{j\leq k}C_j$. 
		
		\medskip
		Denote by $M_2$ the metric completion of $M_1\setminus \cup_jC_j$. Then each compact, two-sided, embedded surface in $M_2$ with boundary on $M_2\cap \partial M$ separates $M_2$. Otherwise, by a minimizing process, one can find an $S\in \mc O_S$ that does not separate $M_2$ and $\dist_M(p,S)<\infty$, which leads to a contradiction.
		
		\medskip
		{\noindent\bf Step III:} {\em There exists a sequence of disjoint surfaces $\{E_j\}\subset \mc O_S$ such that the metric completion of $M_2\setminus \cup_j E_j$ satisfies \ref{item:2-sided isotopy}.}
		
		\medskip
		We construct these surfaces inductively. Suppose we have chosen $\{E_j\}_{j\leq k}$. Then we take $E_{k+1}$ that minimizes $\dist_M(\Gamma',p)$ among all $\Gamma'$ satisfying the following:
		\begin{itemize}
			\item $\Gamma'\in\mc O_S$;
			\item $\Gamma'\subset M_2\setminus \cup_{j\leq k}E_j$;
			\item the metric completion of $M_2\setminus \cup_{j\leq k+1}E_j$ does not have a connected component whose closure is diffeomorphic to $\Gamma'\times [0,1]$.
		\end{itemize}
		Now we are going to prove $\dist_M(p,E_j)\to \infty$ provided that $\{E_j\}$ consists of infinitely many elements. Suppose not, $E_j$ smoothly converges to an embedded surface $S\in \mc O_S$ by the same argument as that in Step II. Then by the smooth convergence, $E_j$ lies on one side of $S$ for all sufficiently large $j$. Then the metric completion of the connected component of $M_2\setminus \cup_{i\leq j+1}E_i$ that contains $E_j$ and $E_{j+1}$, is diffeomorphic to $E_j\times[0,1]$ that contradicts the choice of $\{E_j\}$.
		
		\medskip
		Now let $M_3$ be the metric completion of $M_2\setminus \cup_j E_j$. Clearly, $M_3$ satisfies \ref{item:2-sided isotopy}.

		\medskip
		{\noindent\bf Step IV:} {\em There exists a sequence of two-sided, index one, free boundary minimal surfaces $\{S_j\} \subset M_3$ such that each connected component of the metric completion of $M_3\setminus\cup_jS_j$ is geometrically prime.}
		
		\medskip
		Let $N$ be a connected component of $M_3$ and 
		\[ \partial_rN=\partial M\cap N\ \ \text{ and } \ \ T=\partial N\setminus \partial_rN.\]
		Now we verify that $(N,\partial_rN,T)$ satisfies Assumptions \ref{item:stable T}--\ref{item:frankel}. Note that every connected component of $T$ is from one of the following:
		\begin{itemize}
			\item a double cover of $P_j\in \mc U_S^1$;
			\item one of $C_j\in \mc O_S$;
			\item one of $E_j\in \mc O_S$.
		\end{itemize}
		Thus $N$ satisfies \ref{item:stable T}. By Step I and II, \ref{item:unstable cover} and \ref{item:separate} are satisfied respectively and Step III gives \ref{item:2-sided isotopy} immediately.

		Finally, it remains to verify \ref{item:frankel}. Suppose not, then there exist two disjoint surfaces $S_1,S_2\in \mc U_S^2$ that are contained in $N$. Let $\wti N$ be the metric completion of $N\setminus (S_1\cup S_2)$. Note that $S_1$ and $S_2$ have unstable double covers since $N$ satisfies \ref{item:unstable cover}. Let $\wti S_1$ and $\wti S_2$ be the unstable free boundary minimal surface in $\wti N$ arising from cutting along $S_1$ and $S_2$ respectively. By taking an area minimizer of the homology class in $H_2(\wti N,\partial_r\wti N;\mb Z)$ represented by $\wti S_1$, we obtain a stable free boundary minimal surface $\Gamma\in \mc O_S$. Since $N$ satisfies \ref{item:separate}, then $\Gamma$ separates $N$. Moreover, $S_1$ and $S_2$ lie in two different connected components of $N\setminus \Gamma$, which contradicts the choice of $\{E_j\}$ in Step III. Hence \ref{item:frankel} is satisfied.
		
		\medskip
		Let $\{N_j\}$ be the collection of connected components of $M_3$. Thus each $N_j$ satisfies Assumptions \ref{item:stable T}--\ref{item:frankel}. By Proposition \ref{prop:cut two side to get prime}, there exists a two-sided free boundary minimal surface $S_j$ with index 1 such that the metric completion of $N_j\setminus S_j$ is geometrically prime provided that $N_j$ is not geometrically prime. This finishes Step IV.
		
		\medskip
		Therefore, Lemma \ref{lem:cut} follows by relabelling $\{C_j\}\cup\{E_j\}$ as $\{D_j\}$.

	\end{proof}

	\section{Upper bounds for Uryson 1-width}\label{sec:bound}
	In this section, we are in a position to prove an upper bound of Uryson 1-width for all three dimensional Riemannian manifolds with non-negative Ricci curvature and strictly mean convex boundary. By the definition, it suffices to construct a continuous function such that every connected component of all the level sets has a uniformly bounded diameter.

	In the first part of this section, we construct a function on each geometrically prime region. By Lawson-Gromov's tricks, the distance function to the unstable component of the portion is a good choice. However, for later reason, the desired function on each connected component in the portion is required to have the same value. Hence we modify the distance function near the portion so that the diameter bound still holds.

	Recall that $T_0$ always denotes the unstable component if $T$ is a free boundary minimal surface by Definition \ref{def:geo prime}.
	\begin{lemma}\label{lem:width for prime}
		Let $(N^3,\partial_r N,T,g)$ be a three dimensional geometrically prime Riemannian manifold with non-empty relative boundary $\partial_r N$ and portion $T$. Suppose that $\Ric(g)\geq 0$, $H_{\partial_rN}\geq 1$ and $T$ is a free boundary minimal surface. Then there exists a continuous function $f:N\to [0,\infty)$ satisfying
		\begin{enumerate}  
			\item $f(x)=0$ for all $x\in T_0$, where $T_0$ is the union of unstable component of $T$;
			\item  $f(x)=1$ for all $x\in T\setminus T_0$;
			\item $\dist_N(x,y)<117$ for any $t$ and $x, y$ in the same connected component of $f^{-1}(t)$.
		\end{enumerate}
		Moreover, the upper bound in the third statement can be improved to be $49$ if $T$ is stable.
	\end{lemma}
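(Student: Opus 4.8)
The plan is to split according to whether the portion $T$ is stable, and in each case to build $f$ from a distance function by applying the Gromov--Lawson argument of Appendix~\ref{sec:GL trick} and then correcting $f$ in a thin neighbourhood of the stable part of $T$.

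Assume first that $T$ is not stable. Then Definition~\ref{def:geo prime} forces $T_0$ to be a single connected free boundary minimal surface of index one, hence compact with $\sup_{x,y\in T_0}\dist_N(x,y)\le\frac{59\pi}{3}+28$ by Lemma~\ref{lem:stable fbms}\eqref{item:1-sided}. I would take $\mk d=\dist_N(\cdot,T_0)$, so $\mk d\equiv0$ on $T_0$ and $\mk d>0$ elsewhere, and use Proposition~\ref{prop:non-empty T_0} to bound the diameter of every connected component of every level set of $\mk d$ by $25\pi+36$ (a path inside such a component is, up to an arbitrarily small perturbation, a valley curve for $\mk d$). If $T\setminus T_0$ were empty this would already finish the proof, since $25\pi+36<117$; the remaining issue is to pin $f\equiv1$ on the stable disks comprising $T\setminus T_0$ without spoiling this bound.

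For the correction, recall that $T\setminus T_0$ is an at most countable, locally finite union of compact stable free boundary minimal disks $D_1,D_2,\dots$, each with $\mathrm{diam}_N(D_i)\le\pi+\frac{8}{3}$ by Lemma~\ref{lem:stable fbms}\eqref{item:2-sided}. Around each I would fix a thin collar $U_i\cong D_i\times[0,\delta_i)$ with $U_i\cap T=D_i$, the $U_i$ disjoint and disjoint from $T_0$; off $\bigcup_iU_i$ put $f=\mk d$, and on $U_i$ let $f$ interpolate monotonically in the collar direction from the constant $1$ on $D_i\times\{0\}$ to $\mk d$ near $D_i\times\{\delta_i\}$. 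Choosing $\delta_i$ small enough that $\mk d$ oscillates by an arbitrarily small amount over $U_i$, one checks that every connected component of $f^{-1}(t)$ is either a connected component of a level set of $\mk d$ or is obtained from one such together with a single thin slice inside one $U_i$, so that its diameter is at most $25\pi+36$ plus an error governed by $\mathrm{diam}_N(D_i)$ and $\delta_i$; the interpolation must be arranged so this error stays below $117-(25\pi+36)$. By construction $f\ge0$, $f\equiv0$ on $T_0$, $f\equiv1$ on $T\setminus T_0$, which gives $(1)$--$(3)$. When instead $T$ is stable, $T_0=\emptyset$ and $(1)$ is vacuous, so I would only ensure $f\equiv1$ on $T$: take $\mk d=\dist_N(\cdot,p)$ for a fixed $p$ and use Proposition~\ref{prop:distance bounds} (which needs exactly the stability of $T$) to bound level-set components of $\mk d$ by $8\pi+12$, then run the identical collar correction around every component of $T$; since $49-(8\pi+12)$ comfortably exceeds $\pi+\frac{8}{3}$, this produces $(1)$--$(3)$ with bound $49$.

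The main obstacle I anticipate is the collar correction in the non-stable case: a careless interpolation can create a level set whose component sweeps across a stable disk $D_i$ and then escapes along a level set of $\mk d$, adding roughly $\mathrm{diam}_N(D_i)$ on top of $25\pi+36$ and threatening the bound $117$. The point to get right is that the interpolation can be chosen --- exploiting that $\mk d$ is essentially constant on a sufficiently thin $U_i$ and that $f$ is monotone along the collar --- so that this extra contribution is absorbed or replaced by a small collar term. The remaining ingredients (compactness and diameter bounds for $T_0$ and the $D_i$ from Section~\ref{sec:pre}, the valley-curve estimates of Appendix~\ref{sec:GL trick}, local finiteness of the collars, positivity of $f$) are already available.
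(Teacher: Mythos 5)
Your overall strategy matches the paper's: take $\mk d=\dist_N(\cdot,T_0)$ (or $\dist_N(\cdot,p)$ when $T_0=\emptyset$), invoke Propositions~\ref{prop:non-empty T_0}/\ref{prop:distance bounds} for the bulk of the level sets, and correct $f$ in small neighbourhoods of the stable components of $T$ so that $f\equiv 1$ there. The paper implements the correction with a single global formula rather than collar-by-collar: it sets
$h(x)=\sum_{j\ge1}\eta(\epsilon_j^{-1}\dist_N(x,T_j))$ and
$f=h+\epsilon^{-1}(1-h)\,\mk d$
with $\epsilon<10^{-2}$. The $\epsilon^{-1}$-scaling is the key device you are missing: it guarantees both that $f\ge 1$ away from the $\epsilon$-neighbourhood $U_0$ of $T_0$ (so low level sets are confined near $T_0$, where $\mathrm{diam}(T_0)$ already controls them), and, more importantly, that $f\le\epsilon^{-1}\mk d$ wherever $\mk d\ge\epsilon$. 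This last inequality is exactly what makes any curve $\gamma$ in a level set $f^{-1}(t)$ satisfy $\mk d(\gamma(s))\ge\epsilon t$ with equality at the first and last parameter where $h$ vanishes --- i.e.\ the middle arc of $\gamma$ is a genuine valley curve for $\mk d$. With your un-scaled collar interpolation this inequality has to be checked separately and does not hold on low level sets; you would need an analogous scaling or a separate small-$t$ argument.

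Two concrete gaps in the write-up. First, the claim that ``$\mk d$ oscillates by an arbitrarily small amount over $U_i$'' for small $\delta_i$ is false: $\mk d$ is only $1$-Lipschitz, and $U_i\supset D_i$, so $\mk d$ can vary by up to $\mathrm{diam}_N(D_i)\le\pi+\tfrac{8}{3}$ over $U_i$ no matter how thin the collar is. Second, the ``extra contribution'' you worry about is genuinely additive and cannot be absorbed by a clever choice of interpolation. In the paper's Case~2, the curve is split into two end arcs inside a single neighbourhood $U_j$, each contributing up to $\mathrm{diam}_N(T_j)+O(\epsilon)\le\pi+\tfrac{8}{3}+O(\epsilon)$, and a middle arc handled by Proposition~\ref{prop:non-empty T_0}, contributing up to $25\pi+36$. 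So the bound one actually proves has the form $2(\pi+\tfrac{8}{3})+(25\pi+36)+O(\epsilon)=27\pi+36+\tfrac{16}{3}+O(\epsilon)$; you should not expect to squeeze the collar term below $117-(25\pi+36)$, and your plan should instead accept the additive collar cost (as the paper's Case 2 does) and state the bound accordingly. (In the stable case the middle arc is controlled by $8\pi+12$ from Proposition~\ref{prop:distance bounds}, and $2(\pi+\tfrac{8}{3})+(8\pi+12)<49$ does check out.)
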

	\begin{proof}
		Let $\{T_j\}_{j \geq 1}$ be the collection of the connected components of $T\setminus T_0$. Since all $T_jl$ are compact, then there exist a positive constant $\epsilon<100^{-1}$ and $p\in N\setminus T$ so that 
		\[\dist_N(T\setminus T_0,T_0)>5\epsilon \text{\  if \ } T_0\neq \emptyset \ ; \  \dist_N(T,p)>5\epsilon \text{\  if \ } T_0=\emptyset.
		\]
		and for each $T_j$, there exists $\epsilon_j\in (0,\epsilon/2^j)$ such that 
		$\dist_N(T_j,T\setminus T_j)>5\epsilon_j$. For $j \geq 1$, set
		\[  U_j:=\{x\in N;\dist_N(x,T_j)\leq 2\epsilon_j\};\]
		and for $j=0$, set
		\[ U_0:=\{x\in N;\dist_N(x,T_0)\leq \epsilon\}\ \  \text{ if } \ T_0\neq \emptyset \ ; \ U_0:=\{x\in N;\dist_N(x,p)\leq \epsilon\} \ \ \text{ if } \ T_0=\emptyset.\]
		Let $\eta:[0,\infty)\to[0,1]$ be a continuous cut-off function such that
		\[ \eta(t)=0 \ \ \text{ for } \ \ t\geq 2; \  \ \eta(t)=1 \ \ \text{ for } \ \ t\in[0,1].\]
		Define $h:N\to [0,\infty)$ by
		\[ h(x)=\sum_{j\geq1}\eta(\epsilon_j^{-1}\dist_N(x,T_j)).\]
		It follows that $\eta(\epsilon_j^{-1}\dist_N(x,T_j))=0$ outside $U_j$. Hence $h$ is well-defined, supported in $\cup_jU_j$ and 
		\[ 0\leq h\leq 1; \ \ \  h(x)=0 \text{ \  for \ } x\in N\setminus \cup_{j\geq 1}U_j.\]

		\medskip
		Now we define the desired function. 
		\begin{itemize}
			\item If $T_0\neq \emptyset$, $f:N\to [0,\infty)$ is defined by
			\[ f(x):=h(x)+\epsilon^{-1}(1-h(x))\dist_N(x,T_0).\]
			
			\item 
			If $T_0=\emptyset$, then we take $p\in N$ with $\dist_N(p,T)>5\epsilon$ and define $f$ by
			\[ f(x):=h(x)+\epsilon^{-1}(1-h(x))\dist_N(x,p).\]
		\end{itemize}
		
		For $x\in U_0$, it follows that $\dist_N(x,T_j)\geq 5\epsilon_j$ for $j\geq 1$ and then $h(x)=0$. Hence for $x\in U_0$,
		\begin{equation}\label{eq:f in U_0}
		f(x)=\epsilon^{-1}\dist_N(x,T_0) \text{ \ \ for \ \ } T_0\neq \emptyset; \ f(x)=\epsilon^{-1}\dist_N(x,p)\ \text{  \ \ for \ \  } T_0=\emptyset.
		\end{equation}
		Thus the first statement is satisfied. Also, for all $x\in T\setminus T_0$, we have 
		$h(x)=1$, it implies that $f(x)=1$. This gives the second statement.
		
		\medskip
		Now let's verify that $f$ satisfies the third requirement. For $t\geq0$ and any two points $y,z$ in the same connected component of $f^{-1}(t)$, there exists a continuous curve $\gamma:[0,1]\to N$ with $f(\gamma(s))=t$ for all $s\in [0,1]$ and $\gamma(0)=y$, $\gamma(1)=z$.
		
		We now consider the case that $T_0\neq \emptyset$. If $\dist_N(\gamma(s'),T_0)< \epsilon$ for some $s'\in[0,1]$, then by \eqref{eq:f in U_0}, we conclude that $y,z\in U_0$ and  \[ \dist_N(y,z)\leq \dist_N(y,T_0)+\dist_N(z,T_0)+\sup_{x_1,x_2\in T_0}\dist_N(x_1,x_2)\leq \frac{59\pi}{3}+28+2\epsilon+2\epsilon<117. \]
		
		If $\dist_N(\gamma,T_0)\geq \epsilon$, it follows that 
		\begin{equation}\label{eq:f and d}
		f(\gamma(s))\leq \epsilon^{-1}\dist_N(x,T_0).
		\end{equation}
		Then we have the following two cases.
		
		\medskip
		{\noindent\em Case 1: $h(\gamma(s))>0$ for all $s\in [0,1]$. }
		
		\medskip
		Note that $h$ is supported on disjoint compact sets $\cup_jU_j$. Thus $\gamma\subset U_j$ for some $j\geq 1$. It follows that 
		\[ \dist_N(z,y)\leq \sup_{x_1,x_2\in T_j}\dist_N(x_1,x_2)+2\epsilon_j+2\epsilon_j\leq \pi+\frac{8}{3}+2\epsilon,\]
		which is the desired inequality.
		
		\medskip
		{\noindent\em Case 2: $h(\gamma(s_0)=0$ for some $s_0\in[0,1]$.}
		
		\medskip
		Let 
		\[
		s_1:=\inf\{s\in[0,s_0];h(\gamma(s))=0\}; \ \ s_2:=\sup\{s\in [s_0,1];h(\gamma(s))=0\}.
		\]
		Then $\gamma|_{[0,s_0]}$ (resp. $\gamma|_{[s_1,1]}$) lies in $U_j$ for some $j\geq 1$. Moreover, by the same argument in Case 1,
		\[ \dist_N(y,\gamma(s_1))\leq \pi+\frac{8}{3}+4\epsilon\ \ \text{ and } \ \ \dist_N(z,\gamma(s_2))\leq \pi+\frac{8}{3}+4\epsilon.\]
		Next, we bound the distance from $\gamma(s_1)$ to $\gamma(s_2)$. Indeed, by \eqref{eq:f and d}, for any $s\in[s_1,s_2]$ and $j\in\{1,2\}$,
		\[ \dist_N(\gamma(s),T_0)\geq \epsilon\cdot f(\gamma(s))=\epsilon\cdot f(\gamma(s_j))=\dist_N(\gamma(s_j),T_0).\]
		Then by Proposition \ref{prop:non-empty T_0},
		\[
		\dist_N(\gamma(s_1),\gamma(s_2))\leq 25\pi+36.
		\]
		Then by the triangle inequality,
		\begin{align*} 
		\dist_N(y,z)&\leq \dist_N(y,\gamma(s_1))+\dist_N(\gamma(s_1),\gamma(s_2))+\dist_N(\gamma(s_2),z)\\
		&\leq 27\pi+26+\frac{16}{3}+4\epsilon<117 .
		\end{align*}
		This finishes the proof for $T_0\neq \emptyset$.

		Now it remains to improve the upper bound as $T_0=\emptyset$. If  $\dist_N(\gamma(s'),p)< \epsilon$ for some $s'\in[0,1]$, the inequality is trivial. If $\dist_N(\gamma,T_0)\geq \epsilon$, we only consider the {\em Case 2: $h(\gamma(s_0)=0$ for some $s_0\in[0,1]$.} Then the triangle inequality gives
		\begin{align*}
		\dist_N(y,z)&\leq \dist_N(y,\gamma(s_1))+\dist_N(\gamma(s_1),\gamma(s_2))+\dist_N(\gamma(s_2),z)\\
		&\leq 10\pi+12+\frac{16}{3}+4\epsilon<49.
		\end{align*}
		This finishes the proof of Lemma \ref{lem:width for prime}.
	\end{proof}

	\bigskip
	\begin{theorem}\label{thm:f from M}
		Let $(M^3,\partial M,g)$ be a three dimensional Riemannian manifold with non-empty boundary $\partial M$. Suppose that $\Ric(g)\geq 0$ and $H_{\partial M}\geq 1$. Then there exists a continuous function $f:M\to [0,\infty)$ satisfying that
		\[\dist_M(x,y)\leq 117\]
		for all $t\geq 0$ and $x,y$ in the same connected component of $f^{-1}(t)$. In particular, if $M$ is a domain in $\mb R^3$, then the upper bound can be improved to be $49$.
	\end{theorem}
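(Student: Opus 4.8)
The plan is to build $f$ by gluing together the prime-region functions of Lemma~\ref{lem:width for prime} along the geometrically prime decomposition supplied by Lemma~\ref{lem:cut}. First I would apply Lemma~\ref{lem:cut} to obtain disjoint free boundary minimal surfaces $\{P_i\}\subset\mc U_S^1$, $\{D_j\}\subset\mc O_S$ and two-sided index one surfaces $\{S_k\}$, whose union $\mc D$ cuts $M$ into \emph{geometrically prime} pieces $\{N_\alpha\}$, the connected components of the metric completion of $M\setminus\mc D$, each with relative boundary $\partial_rN_\alpha$ and portion $T^{(\alpha)}$. The structural point I would record is that every component of $T^{(\alpha)}$ is either a copy of some $D_j$, a double cover of some $P_i$ (both stable), or --- for at most one component --- a copy of some $S_k$; hence, by Definition~\ref{def:geo prime}, the unstable part $T_0^{(\alpha)}$ is either empty or exactly that single $S_k$.

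Next I would run Lemma~\ref{lem:width for prime} on each $N_\alpha$ to get a continuous $f_\alpha\colon N_\alpha\to[0,\infty)$ with $f_\alpha\equiv 0$ on $T_0^{(\alpha)}$, $f_\alpha\equiv 1$ on $T^{(\alpha)}\setminus T_0^{(\alpha)}$, and $\dist_{N_\alpha}(x,y)<117$ on every connected component of every level set of $f_\alpha$. To patch the $f_\alpha$ into one continuous $f\colon M\to[0,\infty)$ it is enough that the boundary values match across $\mc D$: a component $\Sigma$ of $\mc D$ is a portion of one or two of the $N_\alpha$, and if $\Sigma$ is some $D_j$ or a $P_i$-double-cover then $\Sigma$ is stable, so $f_\alpha\equiv 1$ on it from each adjacent side, while if $\Sigma$ is some index one $S_k$ then $\Sigma=T_0^{(\alpha)}$, so $f_\alpha\equiv 0$ on it from each adjacent side. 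Thus the $f_\alpha$ agree on overlaps (the cuts are locally finite by construction) and define a continuous $f$ on $M$.

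The remaining, and I expect hardest, point is to control the components of $f^{-1}(t)$ that meet $\mc D$, since a priori such a component could straddle several prime regions; because $f\equiv 0$ or $1$ on $\mc D$, this can only happen for $t\in\{0,1\}$. (i) For $t\notin\{0,1\}$, $f^{-1}(t)$ is disjoint from $\mc D$, so each of its components lies in a single $N_\alpha$ and has $\dist_M$-diameter $<117$ by Lemma~\ref{lem:width for prime}. (ii) For $t=0$, on each piece $f_\alpha^{-1}(0)=T_0^{(\alpha)}$ when this is nonempty and a single point otherwise, so $f^{-1}(0)$ is the disjoint union of the surfaces $S_k$ together with isolated points; each $S_k$ is two-sided of index one, so $\dist_M(x,y)\le\tfrac{59\pi}{3}+28<117$ on it by Lemma~\ref{lem:stable fbms}(2). (iii) For $t=1$, I would use the explicit form of $f_\alpha$ near a component $T_j$ of $T^{(\alpha)}\setminus T_0^{(\alpha)}$: there $f_\alpha\equiv 1$ on a thin collar $\{\dist_{N_\alpha}(\cdot,T_j)\le\epsilon_j\}$ and is $>1$ just outside it, since $\dist_{N_\alpha}(\cdot,T_0^{(\alpha)})>4\epsilon$ near $T_j$ (or $\dist_{N_\alpha}(\cdot,p_\alpha)>4\epsilon$ when $T_0^{(\alpha)}=\emptyset$) forces $f_\alpha>4-3h\ge 1$ there, with equality only where $h=1$. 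Gluing the collars across $T_j$, the component of $f^{-1}(1)$ containing $T_j$ is a union of the (at most two) such collars, of $\dist_M$-diameter at most $\mathrm{diam}_M(T_j)+4\epsilon\le\pi+\tfrac{8}{3}+4\epsilon<117$ by Lemma~\ref{lem:stable fbms}(1); every other component of $f^{-1}(1)$ is disjoint from $\mc D$, hence trapped in a single $N_\alpha$, and again has diameter $<117$. Collecting (i)--(iii) gives $\dist_M(x,y)\le117$.

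For the improvement to $49$ when $M\subset\mb R^3$ I would rerun this argument with the sharper constant of Lemma~\ref{lem:width for prime}, which is available once every portion $T^{(\alpha)}$ is stable; the one genuinely Euclidean ingredient needed is that a mean convex domain of $\mb R^3$ admits a geometrically prime decomposition using only stable free boundary minimal surfaces --- so that Proposition~\ref{prop:cut two side to get prime} is never invoked and $T_0^{(\alpha)}=\emptyset$ throughout --- after which the same level-set analysis yields $\dist_M(x,y)\le49$.
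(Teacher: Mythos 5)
Your proposal matches the paper's proof: decompose via Lemma~\ref{lem:cut}, apply Lemma~\ref{lem:width for prime} to each prime piece, glue the $f_\alpha$ along $\mc D$ using the $0/1$ boundary normalization, and then bound level sets by splitting according to whether they meet $\mc D$. You are in fact somewhat more explicit than the paper in two places where it is terse --- distinguishing the $t=0$, $t=1$, and $t\notin\{0,1\}$ cases and verifying that a level-$1$ component meeting $\mc D$ is trapped in the thin collar(s) of a single stable portion, and spelling out why in $\mathbb{R}^3$ orientability forces $\mc U_S=\emptyset$ so that Proposition~\ref{prop:cut two side to get prime} is never needed and every portion is stable --- but these are elaborations of the same argument, not a different route.
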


	\begin{proof}
		By Lemma \ref{lem:cut}, there exists a sequence of free boundary minimal surfaces $\{D_j\}$ such that each connected component of the metric completion of $M\setminus \cup_jD_j$ is geometrically prime.
		
		Let $\{N_j\}_{j\geq 1}$ denote the union of the connected components of the metric completion of $M\setminus \cup_j D_j$. Note that $\partial_rN_j=N_j\cap \partial M$ and $T_j=\partial N_j\setminus \partial_r N_j$. Denote by $T_{j,0}$ the unstable component of $T_j$. Let $\{T_{j,i}\}_{i\geq 1}$ denote the union of the connected components of $T_j\setminus T_{j,0}$. Then by Lemma \ref{lem:width for prime}, for any $N_j$, there exists a continuous function $f_j:N_j\to [0,\infty)$ such that 
		\begin{itemize}  
			\item $f_j(x)=0$ for all $x\in T_{j,0}$, where $T_{j,0}$ is the union of unstable component of $T_{j}$;
			\item  $f_j(x)=1$ for all $x\in T\setminus T_{j,0}$;
			\item $\dist_N(x,y)<117$ if $x$ and $y$ lie in the same connected component of $f_j^{-1}(t)$.
		\end{itemize}
		Since $f_j|_{T_{j,0}}=0$ if $T_{j,0}\neq \emptyset$, and $f|_{T_j\setminus T_{j,0}}=1$, then the gluing of all these functions induces a continuous  function $ f$ on $M$. 
		
		It remains to prove the diameter bound. Let $K$ be a connected component of $ f^{-1}(t)$ for any fixed $t>0$. 
		
		\medskip
		{\noindent\em Case I: There is no $D_j$ intersecting $K$.}
		
		\medskip
		Clearly, $K$ is contained in one of $N_j$'s. Then the desired upper bound follows immediately.
		
		\medskip
		{\noindent\em Case II: $K$ intersects some $D_j$.}
		
		\medskip
		Then $K\subset \{x\in M;\dist_M(x,D_j)\leq 2\epsilon\}$. It follows that for any $x,y\in K$,
		\[ \dist_M(x,y)\leq \sup_{x_1,x_2\in D}\dist_M(x_1,x_2)+4\epsilon<\pi+\frac{8}{3}+4\epsilon,\]
		where the second inequality is from Lemma \ref{lem:stable fbms}.

		In particular, if $M\subset \mb R^3$, then $\mc U_S=\emptyset$. By Lemma \ref{lem:width for prime}, every connected component of level sets of $f_j$ has diameter upper bound 49. Therefore, such an $f$ satisfies our requirements and Theorem \ref{thm:f from M} is proved.
		
	\end{proof}

	\appendix

	\section{Gromov-Lawson's tricks}\label{sec:GL trick}
	In this section, we give the proof of Proposition \ref{prop:non-empty T_0} for $T_0\neq \emptyset$. The proof for $T_0=\emptyset$ is parallel and we omit the details.
	\begin{proof}[Proof of Proposition \ref{prop:non-empty T_0}]
		Since $(N, \partial_rN, T,g)$ is geometrically prime, then $T_0$ is connected. By Lemma \ref{lem:stable fbms}, for any $x_1,x_2\in T_0$,
		\begin{equation*}
		\dist_N(x_1,x_2)\leq \frac{59\pi}{3}+28.
		\end{equation*}
		For simplicity, let $x$ and $y$ denote $\gamma(0)$ and $\gamma(1)$, respectively. Then we have the following two cases. \underline{If $\mk d(x)\leq \frac{8\pi}{3}+4$}, 
		\begin{equation}\label{eq:small t}
		\dist_N(x,y)\leq \dist_N(x,T_0)+\frac{59\pi}{3}+28+\dist_N(y,T_0)\leq 25\pi+36,
		\end{equation}
		
		\underline{Now we assume that $\mk d(x)>\frac{8\pi}{3}+4+2\epsilon$ for some $\epsilon>0$}. Let $p_x$ and $p_y$ be the closest points in $T_0$ to $x$ and $y$, respectively. Let $\gamma_x$ (resp. $\gamma_y$) be the minimizing curve in $N$ from $p_x$ to $x$ (resp. $y$), i.e. 
		\[\mr{Length} (\gamma_x)=\mk d(x), \ \ \ \text{ and } \ \ \ \mr{Length}(\gamma_y)=\mk d(y).\]
		Let $\sigma$ denote a curve in $T_0$ connecting $p_x$ and $p_y$. Note that $\overline{\gamma_x\gamma{\gamma_y}^{-1}\sigma^{-1}}$ is a closed curve (denoted by $\wti \gamma$) on $N$. Since $N$ is geometrically prime, then $\wti \gamma$ bounds a surface $\Gamma' \subset N$ relative to $T_0$, i.e. $\partial\Gamma'\setminus T_0=\wti\gamma$. Let $\Gamma$ be an area minimizing surface among all those surfaces bounded by $\wti \gamma$ relative to $T_0$. Then $\mathrm{Int} (\Gamma)$ is a smoothly embedded stable minimal surface. By Theorem \ref{thm:compact boundary}, for any $x'\in \Gamma$, 
		\begin{equation}\label{eq:x to partial Gamma}
		\dist_N(x',\partial \Gamma)\leq \frac{4\pi}{3}+2.
		\end{equation}
		
		Next, we take $t^\prime$ such that
		\begin{enumerate}
			\item $\mk d(x)-\frac{4\pi}{3}-2-2\epsilon\leq t'\leq \mk d(x)-\frac{4\pi}{3}-2-\epsilon$;
			\item $\mk d^{-1}(t')$ is transverse to $\Gamma$, $\gamma_x$ and $\gamma_y$.
		\end{enumerate}
		Then there exists a curve $\alpha\subset\mk d^{-1}(t')\cap \Gamma$ joining $\gamma_x$ and $\gamma_y$. Recall that $\mk d(x)=\mk d(y)\leq \mk d(\gamma(s))$. This implies that 
		\[ \dist_N(\gamma,\alpha)\geq\inf_{x'\in \gamma, y'\in \alpha}\{\mk d(x')-\mk d(y')\}\geq \frac{4\pi}{3}+2+\epsilon.\]
		Together with (\ref{eq:x to partial Gamma}), we can find $z\in\alpha$ such that
		\[ \dist_\Gamma(z, \gamma_x)\leq \frac{4\pi}{3}+2, \ \ \text{ and } \ \ \dist_\Gamma(z, \gamma_y)\leq \frac{4\pi}{3}+2.\]
		Let $x_1$ and $y_1$ be the closest points to $z$ on $\gamma_x$ and $\gamma_y$ respectively, by triangle inequalities, we obtain that
		\[ \mk d(x_1)\geq t'-\frac{4\pi}{3}-2, \ \ \text{ and } \ \ \mk d(x_2)\geq t'-\frac{4\pi}{3}-2,\]
		and then
		\[ \dist_N(x,x_1)\leq \frac{8\pi}{3}+4+2\epsilon, \ \ \text{ and } \ \ \dist_N(y,y_1)\leq \frac{8\pi}{3}+4+2\epsilon.\]
		Therefore,
		\[\dist_N(x,y)\leq \dist_N(x,x_1)+\dist_N(x_1,z)+\dist_N(z,y_1)+\dist_N(y_1,y)\leq 8\pi+12+4\epsilon.\]
		Finally, combining with two cases above, we finish the proof of Proposition \ref{prop:non-empty T_0}.
		
	\end{proof}

	\bibliographystyle{amsalpha}
	\bibliography{minmax}

\end{document}